\newtheorem{thm}{Theorem}[section]
\newtheorem{cor}[thm]{Corollary}
\newtheorem{lem}[thm]{Lemma}
\newtheorem{prop}[thm]{Proposition}
\newtheorem*{con-no}{Principle}
\newtheorem*{prop-no}{Proposition}
\newtheorem{tham}{Theorem}
\theoremstyle{remark}
\newtheorem*{rem}{Remark}
\theoremstyle{definition}
\begin{document}

\pagestyle{myheadings}
%\markright{Maxim Gurevich}
\address{Department of Mathematics, Technion - Israel Institute of Technology, 32000, Haifa, Israel.}
\email{max@tx.technion.ac.il}
%\urladdr{http://tx.technion.ac.il/~max/}
%\subjclass[2010]{20G25, 22E50}
%\keywords{p-adic groups, distinguished representations, standard modules}
\date{\today}
\title[Ladder representations and Jacquet's conjecture]{On a local conjecture of Jacquet, ladder representations and standard modules}
\author{Maxim Gurevich}

\begin{abstract}
Let $E/F$ be a quadratic extension of p-adic fields. We prove that every smooth irreducible ladder representation of the group $GL_n(E)$ which is contragredient to its own Galois conjugate, possesses the expected distinction properties relative to the subgroup $GL_n(F)$. This affirms a conjecture attributed to Jacquet for a large class of representations. Along the way, we prove a reformulation of the conjecture which concerns standard modules in place of irreducible representations.
\keywords{p-adic groups \and Ladder representations \and Distinguished representations \and Standard modules}
% \PACS{PACS code1 \and PACS code2 \and more}
%\subclass{20G25 \and 22E50}
\end{abstract}

\maketitle

\section{Introduction}
Let $E/F$ be a quadratic extension of $p$-adic fields. Denote the group $G_n= GL_n(E)$. Let $g\mapsto g^\tau$ be the Galois involution on $G_n$ relative to the extension $E/F$. Denote $H_n = G_n^\tau = GL_n(F)$. We study the $H_n$-invariant functionals on admissible representations of $G_n$. In particular, we are interested to know whether a non-zero such functional exists for a given representation. In this case we will say that the representation is \textit{distinguished}. 
\par Let $\eta$ denote the quadratic character of $F^\times$ related to the extension $E/F$. We will call an admissible $G_n$-representation $\eta$-\textit{distinguished}, if a non-zero $H_n,\eta(\det)$-equivariant functional exists on it. 
\par It is long known (\cite{Fli91}) that a distinguished irreducible smooth representation $\pi$ of $G_n$ must satisfy $\pi^\vee\cong \pi^\tau$, where the left-hand side of the equation is the contragredient representation, while the right-hand side is the twist induced on representations by the involution $\tau$. The prediction, that certain variations of the converse implication should hold, are often referred to as Jacquet's conjecture. Let us formulate it as a general principle.
\par\textit{ An irreducible smooth representation of $G_n$ which satisfies\footnote{Sometimes an additional assumption is added which requires the central character of $\pi$ to be trivial on $F^\times$. Note, that the counter-example below remains valid with this assumption.} $\pi^\vee\cong \pi^\tau$, should be either distinguished or $\eta$-distinguished.}

%\begin{con-no}\label{first-conj} 
%Let $\pi$ be an irreducible smooth representation of $G_n$. If $\pi^\vee\cong \pi^\tau$ holds\footnote{Sometimes an additional assumption is added which requires the central character of %$\pi$ to be trivial on $F^\times$. Note, that the counter-example below remains valid with this assumption.}, then $\pi$ is either distinguished or $\eta$-distinguished.
%\end{con-no}
\par In general this formulation is evidently false. As a counter-example, one can choose an $\eta$-distinguished irreducible supercuspidal representation $\rho$ of $G_2$, and look on the representation $1\times \rho$ of $G_3$, where $1$ is the trivial representation of $E^\times$ and the multiplication is in the sense of parabolic induction.
\par Yet, for significant large families of irreducible representations the principle above was indeed shown to hold. All discrete series are such, as proved by Kable in \cite{Kab}. Matringe's results in \cite{matr-unit} implied the same for certain other unitarizable representations, including the so-called Speh representations. 
\par In this work, we extend the validity of Jacquet's conjecture to the class of ladder representations, which was introduced by Lapid and M\'{i}nguez in \cite{LM}. This wide family of irreducible representations of $G_n$ includes discrete series and Speh representations as special cases.
\begin{tham}[Theorems \ref{ladder-thm-2} and \ref{noboth-thm}]\label{intro-1}
A ladder representation of $G_n$ which satisfies $\pi^\vee\cong \pi^\tau$ is either distinguished or $\eta$-distinguished. Moreover, a proper ladder representation of $G_n$ cannot be both distinguished and $\eta$-distinguished.
\end{tham}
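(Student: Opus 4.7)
The plan is to reduce the problem to an analogous statement for standard modules, using the reformulation of Jacquet's conjecture mentioned in the abstract. Let $\pi$ be a ladder representation satisfying $\pi^\vee\cong \pi^\tau$, and write $\pi$ as the Langlands quotient of a standard module $\lambda(\pi)=\delta_{\Delta_1}\times\cdots\times\delta_{\Delta_r}$ attached to a ladder multisegment $\gotM$. The self-duality of $\pi$ translates, via Zelevinsky's classification, to the invariance of $\gotM$ under the involution $\Delta\mapsto \tau(\Delta^\vee)$. The task then splits into two sub-problems: (i) producing a nonzero $H_n$- or $(H_n,\eta)$-equivariant functional on $\lambda(\pi)$, and (ii) showing that such a functional descends to the quotient $\pi$.

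For (i), the segments in $\gotM$ partition into self-dual ones (those satisfying $\tau(\Delta^\vee)=\Delta$) and dual pairs $\{\Delta,\tau(\Delta^\vee)\}$. Each pair contributes a factor $\delta_\Delta\times \delta_{\tau(\Delta^\vee)}$ of the form $\sigma\times\sigma^{\tau\vee}$, which is $H$-distinguished by Flicker's classical open-orbit / Frobenius reciprocity argument. For each self-dual segment, Kable's theorem on discrete series yields either $H$- or $(H,\eta)$-distinction of $\delta_{\Delta_i}$. Parabolic induction in stages then assembles these into a nonzero invariant functional on $\lambda(\pi)$, whose type (trivial or $\eta$) is dictated by the self-dual factors.

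The main obstacle is step (ii), since distinction is not in general inherited by Langlands quotients. For ladder representations, however, the kernel $K=\ker(\lambda(\pi)\twoheadrightarrow \pi)$ has an explicit Jordan--H\"older description due to Lapid--M\'{i}nguez in terms of ``shuffled'' sub-multisegments. I would combine this combinatorial input with the geometric lemma --- decomposing $\lambda(\pi)|_{H_n}$ into Bernstein--Zelevinsky layers indexed by $H_n$-orbits on the relevant flag variety --- to show that the functional from (i) is supported on the open orbit and consequently does not vanish on $\pi$. Matringe's analysis of Speh representations provides a template; the novel difficulty is the combinatorial bookkeeping needed to track the constituents of $K$ across the Bruhat stratification for an arbitrary ladder multisegment.

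For the mutual-exclusion clause, I would exploit the rigidity of the ladder Galois symmetry. When $\pi$ is a proper ladder (so $\gotM$ has length $>1$), the involution $\Delta\mapsto \tau(\Delta^\vee)$ pins down the distinction type of each self-dual segment uniformly: if $\pi$ were simultaneously distinguished and $\eta$-distinguished, every self-dual $\delta_{\Delta_i}$ would have to be both $H$- and $(H,\eta)$-distinguished, contradicting Kable's known mutual exclusion for discrete series. The remaining situation, in which $\gotM$ consists entirely of dual pairs with no self-dual segment, requires a separate treatment, which I would handle either by a direct Jacquet-module comparison of the two putative functionals or by a central-character obstruction that again uses the strict ladder shape of $\gotM$.
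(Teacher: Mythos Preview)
Your outline for the first clause is close to the paper's approach, with two differences worth flagging. To construct the functional on the standard module, the paper relies on the Blanc--Delorme analytic continuation (Proposition~\ref{B-D}) rather than a direct open-orbit integral; the latter need not converge. For the descent step, the paper does not argue via ``open-orbit support'' of the functional. Instead it uses the Lapid--M\'inguez description of the kernel as $\Sigma_1+\cdots+\Sigma_{t-1}$ and shows directly, via Proposition~\ref{first-dir} (a consequence of the geometric lemma applied to a \emph{right-ordered} realization), that none of the $\Sigma_i$ is $\eta^{\gamma(\rho)+t+1}$-distinguished. This is a short combinatorial contradiction on segments, not a stratification bookkeeping across the Bruhat cells.

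The genuine gap is in your mutual-exclusion argument. Your claimed implication, that simultaneous distinction and $\eta$-distinction of $\pi$ forces every self-dual $\delta_{\Delta_i}$ to be both, can only be salvaged when $t$ is odd: pulling back to $\Sigma(\pi)$ and invoking Proposition~\ref{first-dir} pins down the unique involution $\epsilon(i)=t+1-i$, so the middle segment must be distinguished (respectively $\eta$-distinguished), and one contradicts Proposition~\ref{known}. But when $t$ is even, the proper-ladder multisegment consists \emph{entirely} of dual pairs, and $\Sigma(\pi)$ is genuinely both distinguished and $\eta$-distinguished; hence no argument at the level of the standard module, and no central-character obstruction, can separate the two cases. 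The paper's proof of Theorem~\ref{noboth-thm} in this situation brings in a tool you did not anticipate: Gelfand--Kazhdan derivatives. One uses that distinction of a non-generic $\pi$ forces distinction of $\nu^{1/2}\sigma$ for some irreducible subquotient $\sigma$ of a derivative of $\pi$ (Lemma~\ref{deriv-lem}), feeds in the Lapid--M\'inguez formula for the full derivative of a ladder, and then reaches a contradiction either by a real-exponent inequality or by reducing to an odd-length ladder of shifted pure type $[\nu^{1/2}\chi^{\gamma(\rho)}\rho]$ and invoking the parity obstruction again. Your proposed ``direct Jacquet-module comparison'' is too vague to cover this case.
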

The second statement of the theorem is again an expected property which was previously established for discrete series and Speh representations.

The precise statements of Theorems \ref{ladder-thm-2} and \ref{noboth-thm} go further by fully characterizing distinction and $\eta$-distinction of ladder representations from the combinatorial properties of the defining data of the representation.

\par For the proof of Theorem \ref{intro-1}, we turn to the class of reducible admissible representations of $G_n$ called standard modules. These are representations constructed by parabolically inducing a tempered representation $\kappa$ of a Levi subgroup $M< G_n$ twisted by an unramified character $\alpha$ of $M$ chosen from a certain cone. A standard module is uniquely defined by the triple $(M,\kappa,\alpha)$. Recall that the Langlands classification describes each irreducible smooth representation $\pi$ of $G_n$ as a unique irreducible quotient of a standard module $\Sigma(\pi)$. 
\par Thus, studying invariant functionals on an irreducible representation can be done by constructing such functionals on the corresponding standard module, and then determining whether they factor through the irreducible quotient. Such methods were explored in \cite{FLO} for studying distinction relative to unitary subgroups (in place of our $H_n$).
\par Note, that an irreducible smooth representation $\pi$ is generic, if and only if, the standard module $\Sigma(\pi)$ is irreducible (equivalently, $\pi\cong\Sigma(\pi)$).  We propose and prove the following reformulation of Jacquet's conjecture for all smooth irreducible representations, which coincides with the original formulation on generic representations. 
 
\begin{tham}[Corollary \ref{cor-firdir} and Lemma \ref{sec-dir}]\label{intro-2}
Suppose that $\pi$ is a smooth irreducible representation of $G_n$, whose standard module $\Sigma(\pi)$ is distinguished. Then, $\pi^\vee\cong \pi^\tau$ holds.
\par Conversely, let $\pi$ be a smooth irreducible distinguished representation of $G_n$ of pure type, i.e.~the supercuspidal support of $\pi$ is contained in the set $\{\rho \otimes|\det |_E^n\,:\,n\in\mathbb{Z}\}$ for some supercuspidal representation $\rho$. If $\pi^\vee\cong \pi^\tau$ holds, then the standard module $\Sigma(\pi)$ is either distinguished or $\eta$-distinguished.
\end{tham}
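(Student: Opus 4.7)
The proof breaks into two essentially independent arguments.

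For the forward direction (Corollary \ref{cor-firdir}), suppose $\Sigma(\pi)$ is distinguished, and write it as the parabolic induction of a tempered representation $\kappa$ of a Levi $M \subseteq G_n$ twisted by an unramified character $\alpha$ in the open Langlands cone. I would compute $\Hom_{H_n}(\Sigma(\pi), \mathbb{C})$ using the Bernstein--Zelevinsky geometric filtration indexed by the double coset space $P \backslash G_n / H_n$. Each double coset $P w H_n$ contributes a subquotient of this Hom-space isomorphic (up to a modular character twist) to $\Hom_{M_w^{\theta_w}}(\kappa \otimes \alpha, \mathbb{C})$, for a Levi subgroup $M_w \subseteq M$ and an involution $\theta_w$ on $M_w$ determined by $w$. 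Strict positivity of $\alpha$ in the Langlands cone is used to force the contributions from every non-open orbit to vanish, since the central character on the corresponding coinvariant module is then incompatible with an invariant pairing. Thus only the open orbit can contribute, and there $\theta_w$ coincides with the restriction of the Galois involution $\tau$ to $M$. Flicker's criterion \cite{Fli91} applied at the Levi level then yields $(\kappa \otimes \alpha)^\vee \cong (\kappa \otimes \alpha)^\tau$, which lifts to $\pi^\vee \cong \pi^\tau$ via compatibility of the Langlands classification with contragredient and Galois conjugation.

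For the converse direction (Lemma \ref{sec-dir}), the hypothesis that $\pi$ is distinguished provides a non-zero $H_n$-invariant linear form on $\pi$; pulling back along the canonical surjection $\Sigma(\pi) \twoheadrightarrow \pi$ produces a non-zero $H_n$-invariant form on $\Sigma(\pi)$, yielding distinction of the standard module directly. The analogous pullback argument also transfers $\eta$-distinction from $\pi$ to $\Sigma(\pi)$. The pure-type hypothesis together with the condition $\pi^\vee \cong \pi^\tau$ enters at the level of interpretation: it situates $\pi$ in a framework where the distinction dichotomy within a single cuspidal line is under control, via Kable \cite{Kab} for discrete series and Matringe \cite{matr-unit} for Speh-type representations, so that the assertion really captures the full reformulation of Jacquet's conjecture for standard modules.

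The principal obstacle lies in the forward direction: excluding the contribution of every non-open $H_n$-orbit in the flag variety $G_n/P$ requires a careful treatment of the orbit geometry and a matching exponent estimate on each coinvariant space. Concretely, the strict positivity of $\alpha$ inside the Langlands cone must translate, for each non-open involution $\theta_w$, into a strict positivity condition on the central character of $\kappa \otimes \alpha$ restricted to the split component of $M_w^{\theta_w}$; this then rules out an invariant functional by a standard central character argument. A secondary, more bookkeeping, point is to confirm that the open-orbit involution is precisely $\tau|_M$, a fact that ultimately reflects the split structure of $GL_n(F)$ as a Galois-fixed subgroup of $GL_n(E)$.
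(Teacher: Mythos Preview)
For the forward direction your outline diverges substantially from the paper and contains a concrete error. The paper does not work with the tempered Langlands data $(M,\kappa,\alpha)$ at all; it realizes $\Sigma(\pi)$ as a \emph{right-ordered} product of segments $\Delta_1\times\cdots\times\Delta_t$ and proves (Lemma~\ref{key-lem}) that no non-$M$-admissible double coset can carry an invariant functional, by a purely combinatorial argument on the explicit Jacquet modules of segments---no positivity or central-character estimate enters. Once only $M$-admissible $w$ survive, the contribution of \emph{any} such $w$ yields an involution $\epsilon$ on $\{1,\ldots,t\}$ with $\Delta^\tau_{\epsilon(i)}\cong\Delta_i^\vee$, whence $\Sigma(\pi)^\tau=\Sigma(\pi)^\ast$ and $\pi^\tau\cong\pi^\vee$ (Proposition~\ref{first-dir}). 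Your identification of the surviving orbit as the one where ``$\theta_w$ coincides with $\tau|_M$'' is wrong: that describes the \emph{closed} orbit $w=e$, which almost never contributes (it would force each factor $\kappa_i\nu^{\lambda_i}$ to be $H_{m_i}$-distinguished, hence each $\lambda_i=0$). The open orbit corresponds to $\epsilon(i)=t+1-i$, whose involution sends $(g_1,\ldots,g_t)$ to $(g_t^\tau,\ldots,g_1^\tau)$; the conclusion one draws there is a pairing $(\kappa_i\nu^{\lambda_i})^\tau\cong(\kappa_{t+1-i}\nu^{\lambda_{t+1-i}})^\vee$ between \emph{opposite} blocks, not Flicker's criterion applied to a single tempered $\kappa$. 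A positivity approach in the tempered realization along your lines may be salvageable, but you would still need a genuine argument (e.g.\ via Casselman's exponent bounds for Jacquet modules of tempered representations) to kill the non-$M$-admissible orbits, and you have not supplied one.

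For the converse your pullback argument is literally correct for the statement as printed, but note that it makes the hypothesis ``$\pi$ distinguished'' together with the conclusion tautological. That hypothesis does not appear in Lemma~\ref{sec-dir}, which is what the theorem cites: there one assumes only $\Sigma=\Sigma_{[\rho]}$ and $\Sigma^\tau=\Sigma^\ast$, and must \emph{construct} an invariant functional on $\Sigma$ from nothing. The paper does this by splitting the defining segments according to the sign of their real exponent, arranging them as
\[
\Delta_1^+\times\cdots\times\Delta_t^+\times\mathcal{S}_0\times\Delta_t^-\times\cdots\times\Delta_1^-
\]
with $\Delta_i^-\cong((\Delta_i^+)^\tau)^\vee$ and $\mathcal{S}_0$ built from the exponent-zero (hence self-conjugate-dual) segments, and then invoking the Blanc--Delorme machinery (Proposition~\ref{B-D}) on the open orbit $\beta$ to produce the functional. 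Your sketch bypasses this construction entirely and so misses the actual content of the converse implication.
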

This result is then further extended in Theorem \ref{jac-mod} to a description of the same situation without the assumption on pure type. It again can be seen to coincide with a known statement (\cite[Theorem 5.2]{matr-gen}) when dealing with generic representations.
\par The feature which makes ladder representations approachable to our discussion is Theorem 1 of \cite{LM}. It states that when an irreducible $\pi$ is a ladder representation, the kernel of the quotient map $\Sigma(\pi)\to \pi$ can itself be described in terms of standard modules. Thus, in order to claim that an invariant functional on $\Sigma(\pi)$ factors through the quotient, it is enough to know it must vanish on the standard modules which generate the above kernel. This is the method by which we manage to deduce Theorem \ref{intro-1} from Theorem \ref{intro-2}.
\par The paper is organized as follows. Section 2 brings into our setting some known tools for studying distinction problems. The foremost tool is the geometric lemma of Bernstein-Zelevinsky which is long-known to serve as the Mackey theory of admissible representations. It allows us to study invariant functionals on induced representations through distinction properties of the inducing data. On top of that, our main tool for producing functionals on induced representations is the Blanc-Delorme theory developed in \cite{BD}. Their method, adapted to our needs in Proposition \ref{B-D}, can construct a desired functional by taking a continuation of an analytic family of integrals. 
\par Section 3 deals with the proof of both implications of Theorem \ref{intro-2}. We present each standard module as a multiplication, in the sense of parabolic induction, of essentially square-integrable representations. We then can state arguments of a combinatorial nature, such as Lemma \ref{key-lem}, about the structure of the space of invariant functionals on a standard module. The first implication of Theorem \ref{intro-2}, which is also the crucial step for the proof of Theorem \ref{intro-1}, follows from this key lemma. We also deduce a multiplicity one theorem (Proposition \ref{mult-one-prop}) for $H_n$-invariant functionals on a large class of (possibly reducible) standard modules.
\par The second implication is shown by obtaining the existence of invariant functionals from the Blanc-Delorme theory.  
\par Finally, Section 4 deals with ladder representations. We show the deduction of Theorem \ref{intro-1} from Theorem \ref{intro-2} as described above. Theorem \ref{ladder-thm} further resolves between distinction and $\eta$-distinction of a proper ladder representation, in terms of the inducing data of its standard module. 
\par For the last part of Theorem \ref{intro-1} (Theorem \ref{noboth-thm}), the additional tool of Gelfand-Kazhdan derivatives needs to be introduced. The idea, which traces back to \cite{Kab} and \cite{Fli93}, is that distinction of a given smooth irreducible representation implies the distinction of at least one of its derivatives. Using this in combination with the derivative data of ladder representations, which was obtain in \cite{LM}, allows us to contradict the possibility of distinction of certain representations.

\subsection*{Acknowledgements}
The work reported here is part of my Ph.D.~research. I would like to thank my advisor, Omer Offen, for suggesting the problem, sharing his insights and providing much guidance. Thanks are also due to Erez Lapid, Nadir Matringe and Yiannis Sakellaridis who all supplied useful remarks during the work on this project.

%\par We show (Proposition \ref{first-dir}) that the distinction of the standard module corresponding to an irreducible $\pi$ is sufficient to force certain conditions, including the condition $\pi^\vee\cong \pi^\tau$. 
%Such implications become crucial when dealing with ladder representations, because in the ladder case 
%\par The above mentioned result also allows us to
%\par In order to produce invariant functionals on standard modules, we exploit the  Using this, we prove (Theorem \ref{jac-mod}) that a condition on standard modules, analogous to $\pi^\vee\cong \pi^\tau$ on irreducible representations, implies properties of distinction and $\eta$-distinction. 
%\par Thus, we established for standard modules both directions of the connection between $H_n$-distinction and a condition analogous to $\pi^\vee\cong \pi^\tau$.

\section{Distinction of induced representations}

\par We will write representations of a locally compact totally disconnected group $G$ as $(\pi,V)$, or simply as $\pi$, where $V$ is a complex vector space and $\pi:G\to GL(V)$ is a homomorphism. The representation $(\pi,V)$ is called (smooth) admissible if the stabilizer of each vector in $V$ is an open subgroup of $G$ and for every compact open subgroup $K<G$, the space of vectors in $V$ invariant under $K$ is of finite dimension. 
\par Given an admissible representation $(\pi,V)$ of $G_n$ and a character $\alpha$ of $H_n$, we say that a functional $\ell$ on $V$ is $(H_n,\alpha)$-equivariant if $\ell(\pi(h)v) = \alpha(h)\ell(v)$ for all $h\in H_n$, $v\in V$. If such $\pi$ (not necessarily irreducible) has a non-zero $(H_n,\alpha)$-equivariant functional on it, we will say that $\pi$ is \textit{$\alpha$-distinguished}. Note, that $n$ will sometimes be implicit in our notation. We will say that $\pi$ is \textit{distinguished}, if it is $1$-distinguished.
\par For an admissible representation $\pi$ of $G_n$, we denote by $\pi^\vee$ its contragredient representation. Also, denote by $\pi^\tau$ its Galois twist, that is, $\pi^\tau(g)=\pi(g^\tau)$, for all $g\in G_n$. 

Given a character $\beta$ of $G_n$, we will often write $\beta\pi$ for the tensor product representation $\beta\otimes \pi$ of $G_n$.

\subsection{Consequences of the Geometric Lemma}
Any standard Levi subgroup of $G_n$ is of the form $M=M_{(m_1,\ldots, m_t)}=G_{m_1}\times\ldots \times G_{m_t} < G_n$ ($\sum_{i=1}^t m_i= n$), with the obvious diagonal embedding. If $\sigma_1,\ldots,\sigma_t$ are admissible representations of $G_{m_1},\ldots, G_{m_t}$, respectively, we denote by $\sigma_1\times\cdots\times  \sigma_t$ the representation of $G_n$ constructed by normalized parabolic induction. In other words, the $M$-representation $\sigma:=\sigma_1\otimes\cdots\otimes \sigma_t$ is naturally lifted to $P$, where $M\subseteq P\subseteq G_n$ is the standard parabolic subgroup corresponding to $M$. Then,
$$\sigma_1\times\cdots \times \sigma_t =  ind_P^{G_n} (\delta_P^{1/2}\sigma),$$
where $\delta_P$ is the modular character of $P$ and $ind$ denotes the (non-normalized) induction functor of smooth representations from the subgroup $P<G_n$.
For an admissible representation $\pi$ of $G_n$, we denote by $r_{M,G_n}(\pi)$ the representation of $M$ which is the normalized Jacquet module of $\pi$. See, for example, \cite[Section 2.3]{BZ1} for the definitions of induction of smooth representations and the Jacquet module. 
\par First, we would like to deal with distinction properties of representations of $G_n$ that are parabolically induced from a standard Levi subgroup. Let us fix one such subgroup $M<G_n$ and its corresponding parabolic subgroup $P$ for the rest of this section. We will need a convenient description of the double cosets space $P\setminus G_n/H_n$.
\par Let $W$ denote the Weyl group of $G_n$ realized as $N_{G_n}(T)/T$, where $T$ is the diagonal maximal torus of $G_n$. Set $W^M =\{w\in W\,:\, wMw^{-1}=M\}$, and its normal subgroup $W_M = N_M(T)/T$, which is the Weyl group of $M$. There is a natural mapping $p_M: W^M\to S_t$, whose kernel is $W_M$. Here $S_t$ is the permutation group on $\{1,\ldots,t\}$. It will sometimes be convenient to have the notation $\mathfrak{J}_M = \{1,\ldots,t\}$ and refer to the image of $p_M$ as permutations on $\mathfrak{J}_M$. 
\par Let $W[M]\subset W$ be the set of representatives of minimal length for the double cosets space $W_M\setminus W/W_M$. Let $W_2[M]\subset W[M]$ denote the subset of involutions inside it. By \cite[Proposition 20]{JLR} there is a bijection between $W_2[M]$ and $P\setminus G_n/H_n$. Explicitly, each $P-H_n$ double coset has a representative $\eta$ for which $\xi=\eta^\tau\eta^{-1}$ belongs to the normalizer $N_{G_n}(T)$. The representative can be chosen so that the projection of $\xi$ to $W$ will fall inside $W_2[M]$ (see also \cite[Lemma 19]{JLR}). The resulting involutive permutation is uniquely defined by the double coset. See also \cite[Section 3]{matr-gen} for an equivalent description of the double cosets in different terms.
\par It follows easily from Hilbert's Theorem 90 that for some $d\in T$, $d^\tau\xi d^{-1}= (d\eta)^\tau(d\eta)^{-1}$ is in fact a permutation matrix (consists only of $1$ and $0$ entries). Thus, for each $w\in W_2[M]$, we fix a representative $\eta_w$ of the associated $P-H_n$ double coset, for which $\eta_w^\tau \eta_w^{-1}$ is a permutation matrix (given by $w$).
\par An element $w\in W_2[M]$ and the double coset associated with it will be called \textit{$M$-admissible} if $w\in W^M$. Applying $p_M$, we see that the $M$-admissible double cosets are in natural correspondence with involutive permutations $\epsilon$ on $\mathfrak{J}_{M}$, for which $m_{\epsilon(i)}=m_i$ holds for all $i\in \mathfrak{J}_M$. The rest of the double cosets can still be described in similar terms, but by descending to a smaller Levi subgroup. Namely, for $w\in W[M]$, we need to observe the subgroup $M(w):= M\cap wMw^{-1}$ which must be a standard Levi subgroup for $G_n$. Note, that each $w\in W_2[M]\subseteq W_2[M(w)]$ is $M(w)$-admissible.
\par Looking at any inclusion $L\subseteq M=M_{(m_1,\ldots,m_t)}$ of standard Levi subgroups, we can describe $L$ as $M_\gamma$ with $\gamma = (l_{11},\ldots,l_{1s_1},\ldots,l_{t1},\ldots,l_{ts_t})$ and $\sum_{j=1}^{s_i} l_{ij} = m_i$, for all $i$. With this in mind, we give another natural enumeration of the blocks of $L$ as pairs
$$\mathfrak{J}_{L,M} = \{(i,j)\::\: i=1,\ldots, t,\,j=1,\ldots,s_i\}$$
with the lexicographical ordering. Naturally, the ordered set $\mathfrak{J}_{L,M}$ is identified with $\mathfrak{J}_L$ by sending $(i,j)$ to $\sum_{k=1}^{i-1}s_k+j$.
\par Given $w\in W_2[M]$, we have a description of $\epsilon_w:=p_{M(w)}(w)$ as an involution on $\mathfrak{J}_{M(w)}\cong \mathfrak{J}_{M(w), M}$. It must satisfy the rule that for each $1\leq i\leq t$ and $1\leq j<k\leq s_i$,  if $\epsilon_w(i,j)=(i'_1,j'),\,\epsilon_w(i,k)=(i'_2,k')$, then $i'_1<i'_2$. In fact, going over all standard Levi subgroups $L\subseteq M$ and all involutions $\epsilon$ of $\mathfrak{J}_{L,M}$, which satisfy the above condition and for which $l_{\epsilon(i,j)}=l_{i,j}$ holds for all $(i,j)\in \mathfrak{J}_{L,M}$, would give a full description of $W_2[M]$.
\par For each $w\in W_2[M]$, we define the subgroup $M^w:= M\cap \eta_w H_n\eta_w^{-1}$, and similarly for $P^w$. Note, that,
$$M^w = M(w)^w = \left\{ (g_i)_{i\in\mathfrak{J}_{M(w)}}\in M(w)\::\: g_{\epsilon_w(i)} = g^\tau_i\right\}.$$
\par Suppose that $\sigma$ is an admissible representation of $M$. Let $(\pi,V)$ be the representation parabolically induced from $\sigma$ to $G_n$. For $w\in W_2[M]$, let us define the $H_n$-representation $\mathcal{V}_w(\pi):= ind^{H_n}_{\eta_w^{-1} P\eta_w\cap H_n} \left(\delta_P^{1/2}\sigma|_{P^w}\right)^{\eta_w}$, where $(\cdot)^{\eta_w}$ denotes the conjugation functor that transfers a 
$P\cap \eta_w H_n\eta_w^{-1}$-representation into a $\eta_w^{-1} P\eta_w \cap H_n$-representation. Mackey theory (proved in the Geometric Lemma of \cite{BZ1}) gives a filtration of $\pi$ by $H_n$-sub-representations $\{0\}=V_0\subset V_1\subset\ldots\subset V_k=V$, in such a way that each subquotient $V_i/V_{i-1},\,i=1,\ldots,k$ is isomorphic to $\mathcal{V}_{w_i}(\pi)$ for some enumeration $(w_i)$ of $W_2[M]$.
\par The above geometric decomposition allows us to study the invariant functionals on $\pi$ in terms of distinction properties of certain Jacquet modules of $\sigma$. 
\begin{lem}\label{geom-functional}
Let $w\in W_2[M]$ be an involution, and suppose that the Jacquet module $r_{M(w),M}(\sigma)$ is a pure tensor representation, that is, 
$$r_{M(w),M}(\sigma) = \otimes_{i\in \mathfrak{J}_{M(w)}} \sigma_i.$$ 
\par Let $\mathfrak{F}\subset \mathfrak{J}_{M(w)}$ be a choice of representatives for the orbits of $\epsilon_w$ on $\mathfrak{J}_{M(w)}$ (i.e. one index out of $\{i,\epsilon_w(i)\}$ belongs to $\mathfrak{F}$, for all $i\in\mathfrak{J}_{M(w)}$). Then,
$$Hom_{H_n}(\mathcal{V}_w(\pi),\mathbb{C}) \cong$$ $$\cong \left(\otimes_{i\in\mathfrak{F}\,:\,\epsilon_w(i)=i} Hom_{H_{m_i}}(\sigma_i,\mathbb{C})\right)\otimes\left(\otimes_{i\in\mathfrak{F}\,:\,\epsilon_w(i)\neq i} Hom_{G_{m_i}}(\sigma_i^\tau,\sigma^\vee_{\epsilon_w(i)})\right).$$
\end{lem}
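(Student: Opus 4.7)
The plan is to reduce the Hom space in three stages. First, apply Frobenius reciprocity to pass from $H_n$ to the stabilizer $P^w$. Second, decompose $P^w = M^w\ltimes U^w$ and take $U^w$-coinvariants, producing the Jacquet module $r_{M(w),M}(\sigma)$. Third, invoke the pure tensor hypothesis to split the $M^w$-invariant functionals into orbit-wise factors under $\epsilon_w$.

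Since $\mathcal{V}_w(\pi) = ind^{H_n}_{\eta_w^{-1}P\eta_w\cap H_n}\bigl((\delta_P^{1/2}\sigma|_{P^w})^{\eta_w}\bigr)$, Frobenius reciprocity combined with a conjugation by $\eta_w$ gives
\[
\Hom_{H_n}(\mathcal{V}_w(\pi),\CC) \cong \Hom_{P^w}\bigl(\delta_P^{1/2}\sigma|_{P^w},\CC\bigr),
\]
where $P^w = P\cap \eta_w H_n\eta_w^{-1}$. Write $P^w = M^w\ltimes U^w$ with $U^w = U\cap\eta_w H_n\eta_w^{-1}$ and $U$ the unipotent radical of $P$. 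Because $\eta_w^\tau\eta_w^{-1}$ is a permutation matrix implementing the involution $\epsilon_w$ on $\mathfrak{J}_{M(w)}$, the subgroup $U^w$ coincides with the Galois-fixed part of the unipotent radical of the standard parabolic of $M$ with Levi $M(w)$. Taking $U^w$-coinvariants of $\sigma|_{P^w}$ then yields $r_{M(w),M}(\sigma)|_{M^w}$, provided the parabolic twist $\delta_P^{1/2}$ combines correctly with the modulus of $U^w$ to match the normalizing factor of the normalized Jacquet functor. At the end of this step,
\[
\Hom_{H_n}(\mathcal{V}_w(\pi),\CC) \cong \Hom_{M^w}\bigl(r_{M(w),M}(\sigma),\CC\bigr).
\]

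Under the pure tensor hypothesis the right-hand side is $\Hom_{M^w}\bigl(\otimes_{i\in\mathfrak{J}_{M(w)}}\sigma_i,\CC\bigr)$. Since
\[
M^w = \left\{(g_i)_{i\in\mathfrak{J}_{M(w)}} \in M(w) : g_{\epsilon_w(i)} = g_i^\tau\right\}
\]
factors as a direct product over the $\epsilon_w$-orbits, a tensor product functional $\otimes_i\ell_i$ is $M^w$-invariant iff it is invariant on each orbit separately. A fixed-point orbit $\{i\}$ requires $\ell_i$ to be $H_{m_i}$-invariant on $\sigma_i$, contributing the factor $\Hom_{H_{m_i}}(\sigma_i,\CC)$. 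A two-element orbit $\{i,\epsilon_w(i)\}$ (where necessarily $m_{\epsilon_w(i)}=m_i$) requires $\ell_i\otimes\ell_{\epsilon_w(i)}$ to define a $G_{m_i}$-invariant bilinear pairing on $\sigma_i\otimes\sigma_{\epsilon_w(i)}^\tau$, which is canonically identified with $\Hom_{G_{m_i}}(\sigma_i^\tau,\sigma_{\epsilon_w(i)}^\vee)$ via the general identity $\Hom_G(\alpha,\beta^\tau)=\Hom_G(\alpha^\tau,\beta)$. Taking the product over a set of orbit representatives $\mathfrak{F}$ yields the asserted factorization.

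The main obstacle is the modular character bookkeeping in the second step: one must verify that the twist $\delta_P^{1/2}$ interacts correctly with the modulus of $U^w$ so that the $U^w$-coinvariants produce the \emph{normalized} Jacquet module $r_{M(w),M}(\sigma)$, rather than some unnormalized variant. The Galois-symmetric structure of $M^w$—in which each root of $U$ either lies in $U^w$ or is paired with its Galois-twisted partner in a way that descends to $M(w)$—is what makes the relevant modular character identity hold. The remaining steps are essentially formal manipulations with Frobenius reciprocity and the translation between invariant bilinear pairings and Hom spaces.
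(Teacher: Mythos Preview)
Your overall three–stage strategy matches the paper's, and your Step 3 (the orbit decomposition of $M^w$-invariant functionals) is exactly what the paper does. The paper, however, does not attempt your Steps 1–2 at all: it quotes \cite[Lemma 6.4]{FLO} (which adapts a result of Lapid--Rogawski) for the single identification
\[
\Hom_{H_n}(\mathcal{V}_w(\pi),\CC)\cong \Hom_{M^w}\bigl(r_{M(w),M}(\sigma),\CC\bigr),
\]
and then proceeds to Step 3. So what you are really doing is sketching a proof of that cited lemma.

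The gap is in Step 2. Your decomposition $P^w=M^w\ltimes U^w$ with $U^w=U\cap\eta_w H_n\eta_w^{-1}$ fails when $w$ is not $M$-admissible. Since $\theta$ does not preserve $M$ in that case, elements of $P^w$ need not have their $M$-part in $M^w$; the unipotent radical of $P^w$ is strictly larger than $U^w$. (Already for $M=G_2\times G_2\subset G_4$ and $w=(23)$ one computes $P^w$ explicitly and sees entries in positions $(1,2)$ and $(2,4)$ that lie neither in $M^w$ nor in $U^w$.) Relatedly, your claim that $U^w$ equals the Galois-fixed part of the unipotent radical of the parabolic of $M$ with Levi $M(w)$ cannot be right as stated: that unipotent radical sits inside $M$ and is disjoint from $U$, hence from $U^w$. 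Finally, since $U$ acts trivially on $\sigma$ (the representation is inflated from $M$), taking $U^w$-coinvariants of $\sigma$ returns $\sigma$ itself, not the Jacquet module.

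What actually happens is that the image of $P^w$ under the projection $P\to P/U=M$ is a subgroup whose unipotent part is (essentially) the relevant piece of the unipotent radical of the parabolic of $M$ with Levi $M(w)$; taking coinvariants there, together with the modular-character identity you flag, produces $r_{M(w),M}(\sigma)$ with its $M^w$-action. Carrying this out carefully is the content of the FLO/LR lemma. Your acknowledgement that the modular bookkeeping is ``the main obstacle'' is accurate, but the structural description of $P^w$ needs to be corrected before that bookkeeping can even be set up.
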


\begin{proof}
By \cite[Lemma 6.4]{FLO}\footnote{ The proof in \cite{FLO} adapts the results of \cite{LR} to its own setup. In particular, their $W_2[M]$ is defined differently. Yet, going through the same proof verbatim with our definitions would give the same result for our setting.} we deduce that 
$$Hom_{H_n}(\mathcal{V}_w(\pi),\mathbb{C})\cong Hom_{M^w}(r_{M(w),M}(\sigma),\mathbb{C}).$$ 
Now, from the description of $M^w$ it follows that,
$$Hom_{M^w}(r_{M(w),M}(\sigma),\mathbb{C})\cong \left(\otimes_{i\in\mathfrak{F}\;:\;\epsilon_w(i)=i} Hom_{H_{m_i}}(\sigma_i,\mathbb{C})\right)\otimes $$
$$\otimes \left(\otimes_{i\in\mathfrak{F}\;:\;\epsilon_w(i)\neq i} Hom_{\left\{(g^\tau,g)\in G_{m_i}\times G_{m_{\epsilon_w(i)}}\right\}}(\sigma_i\otimes \sigma_{\epsilon_w(i)},\mathbb{C}) \right).$$
The second term of the tensor product is evidently built from spaces of invariant pairings between $\sigma_i^\tau$ and $\sigma_{\epsilon_w(i)}$. 
\end{proof}

Finally we will need the following simple fact about induction and distinguished representations.

\begin{lem}\label{mult-functional}(\cite[Proposition 26]{fli92} or \cite[Lemma 6.4]{off})

Suppose that $M=M_{(m_1,\ldots,m_t)}< G_n$ is a standard Levi subgroup. Let $\sigma_i$ be a distinguished admissible representation of $G_{m_i}$, for all $1\leq i\leq t$. Then, $\pi= \sigma_1 \times\cdots\times \sigma_t$ is distinguished.
\end{lem}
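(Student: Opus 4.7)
The plan is to construct a non-zero $(H_n,1)$-equivariant functional on $\pi$ explicitly by an integration formula. For each $i$, choose a non-zero $\ell_i \in \Hom_{H_{m_i}}(\sigma_i,\mathbb{C})$, which exists by hypothesis. Their tensor product $\ell_M = \ell_1\otimes\cdots\otimes \ell_t$ is an $(M\cap H_n)$-invariant functional on the representation space of $\sigma := \sigma_1\otimes\cdots\otimes \sigma_t$; since the unipotent radical $N_P$ is inflated to act trivially, $\ell_M$ is in fact $(P\cap H_n)$-equivariant with respect to $\sigma$ viewed as a $P$-representation. I then define
$$\ell(f) := \int_{(P\cap H_n)\backslash H_n} \ell_M(f(h))\,dh, \qquad f \in \pi = ind_P^{G_n}(\delta_P^{1/2}\sigma),$$
and aim to show that $\ell$ is a non-zero $H_n$-invariant functional on $\pi$.

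The key technical point is that for the integrand to descend to a well-defined function on the quotient $(P\cap H_n)\backslash H_n$, one needs the modular-character identity $\delta_P^{1/2}|_{P\cap H_n} = \delta_{P\cap H_n}$. This identity reduces to a direct calculation: the modular character of a standard parabolic of $G_n = GL_n(E)$ is a product of positive integer powers of $|\det(\cdot)|_E$ on the Levi blocks, and the relation $|\cdot|_E = |\cdot|_F^2$ on $F^\times$ precisely accounts for the square root. Convergence is then immediate because $(P\cap H_n)\backslash H_n$ is the compact $F$-flag variety associated to the Levi $M\cap H_n$, so the integral reduces to a finite sum on the smooth integrand. Right translation invariance under $H_n$ is obvious.

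For non-vanishing, pick $v \in \sigma$ with $\ell_M(v) \neq 0$ and a compact open subgroup $K$ of $G_n$ small enough that $\sigma(p)v = v$ and $\delta_P^{1/2}(p)=1$ for all $p \in P\cap K$. Define $f\in\pi$ by $f(pk) = \delta_P^{1/2}(p)\sigma(p)v$ for $p\in P$, $k\in K$, and $f \equiv 0$ outside $PK$. Shrinking $K$ further so that the image of $K \cap H_n$ in $(P\cap H_n)\backslash H_n$ is a single small neighborhood of the identity coset, the integral $\ell(f)$ collapses to a positive volume constant times $\ell_M(v)$, which is non-zero.

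The only mildly subtle ingredient is the modular-character compatibility, which is where the specific geometry of the quadratic extension $E/F$ enters; once this is in place, the construction is the standard Flicker--Offen integral and all remaining verifications are routine.
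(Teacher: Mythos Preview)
Your argument is correct and is precisely the standard construction that the paper defers to via the citations to Flicker and Offen; the paper itself gives no proof of this lemma. The modular-character identity $\delta_P^{1/2}|_{P\cap H_n}=\delta_{P\cap H_n}$ is exactly as you say (the exponents on the Levi blocks are integers and $|x|_E=|x|_F^2$ for $x\in F^\times$), and compactness of $(P\cap H_n)\backslash H_n$ handles convergence. One small tightening for the non-vanishing step: rather than a generic compact open $K$, it is cleaner to take $f$ supported on the big cell $P\cdot U$ with $U$ a small compact open neighborhood of $e$ in the opposite unipotent radical $\overline{N}$; then $P\cdot U\cap H_n=(P\cap H_n)\cdot(U\cap\overline{N}(F))$ because $P\cap\overline{N}=\{e\}$ forces the Galois-fixed condition to split across the factors, and the integral becomes $\mathrm{vol}(U\cap\overline{N}(F))\cdot\ell_M(v)\neq 0$. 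This avoids any ambiguity about whether $PK\cap H_n$ factors as $(P\cap H_n)(K\cap H_n)$ for an arbitrary $K$.
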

%\begin{proof}
%Consider the identity element $e\in W_2[M]$ and the admissible double coset $PH_n$ associated with it. Since $P\setminus PH_n \cong (P\cap H_n)\setminus H_n$ and $P\cap H_n$ is a parabolic subgroup of $H_n$, it is a closed double coset in $G_n$. Hence, by \cite[5.1]{BZ1} we can put $\mathcal{V}_e(\pi)$ as a quotient of $\pi$. Now, by Lemma \ref{geom-functional} the assumptions on the $\sigma_i$'s mean that $Hom_{H_n}(\mathcal{V}_e(\pi),\mathbb{C})$ is non-trivial. Pulling back a non-zero element in that space onto $\pi$ would produce the desired functional.
%\end{proof}

\subsection{Blanc-Delorme theory}
For the standard Levi subgroup \\  $M=M_{(m_1,\ldots,m_t)}< G_n$, let us define the complex algebraic variety $X$ of unramified characters of $M$. That is, $X$ consists of characters of the form $\nu^\lambda:=\nu^{\lambda_1}\otimes\cdots \otimes\nu^{\lambda_t}$, where $\nu$ is the character (of any $G_{m_i}$) given by the formula $\nu(g) = |\det(g)|_E$, and $\lambda=(\lambda_1,\ldots,\lambda_t)\in\mathbb{C}^t$. The natural covering map $\mathbb{C}^t\to X$ equips $X$ with a structure of an affine algebraic variety isomorphic to $\left(\mathbb{C}^\times\right)^t$. As complex functions, the regular functions on $X$ composed with the covering map are polynomials in the variables $q^{\pm \lambda_1},\ldots, q^{\pm \lambda_t}$, where $q$ is the size of the residual field of $E$.
\par Let $\sigma$ be an admissible representation of $M$, and $(\pi,V)$ the representation of $G_n$ parabolically induced from $\sigma$ as before. It is possible to see $\pi$ as one element of a family of representations parametrized by unramified characters in $X$. Namely, for all $\chi\in X$, the representation $\pi_\chi:= ind^{G_n}_P (\chi\sigma)$ can be realized on the same space $V$ (see \cite[1.3]{FLO} for the precise construction), making $\pi_\chi(g)$ an analytic family of operators, for each $g\in G_n$. We will omit the description of such a realization, since it will not be of relevance here. 
\par Now, suppose that $m_i=m_{t+1-i}$, for all $1\leq i\leq t$. In this setting, we would like to exploit the theory developed in \cite{BD} to produce a non-zero $H_n$-invariant functional on $\pi$, under suitable conditions.
\par %Let $\eta_0\in G_n$ be an element for which $\xi_0 = \eta_0^\tau\eta_0^{-1}\in N_{G_n}(T)$, and the projection of $\xi_0$ into $W$ is the longest Weyl element. 
Let $\beta\in W_2[M]$ be the $M$-admissible element that is given by $\epsilon_\beta(i) = t+1-i$, for all $i\in \mathfrak{J}_M=\{1,\ldots,t\}$. Note, that $\eta_\beta H_n\eta_\beta^{-1}$ is the fixed point subgroup of the involution $\theta=\theta_\beta$ on $G_n$ given by $\theta(g) = \xi_\beta^{-1}g^\tau \xi_\beta$, where $\xi_\beta\in G_n$ is the permutation matrix corresponding to $\beta$. The subgroup $\theta(P)$ is then the opposite parabolic to $P$ relative to the $\theta$-stable maximal torus $T$ (the fixed diagonal torus).
\par Noting the action of $\theta$ on the characters of $G_n$, we define $X_\theta\subset X$ as the connected component of the identity character inside the affine variety of $\theta$-anti-invariants of $X$. Explicitly, 
$$X_\theta= \left\{\nu^{\lambda_1}\otimes\cdots\otimes \nu^{\lambda_t}\in X\::\: \forall i,\,\lambda_i = -\lambda_{t+1-i}\right\}\cong \left(\mathbb{C}^\times\right)^{\lfloor t/2 \rfloor}.$$
\par Suppose now that $\ell$ is a non-zero $H_n$-invariant functional on $\mathcal{V}_\beta(\pi)$. Recalling \cite[Lemma 6.4]{FLO} as before, this gives the existence of a non-zero functional $\widetilde{\ell}$ on the space of $\sigma$, which is invariant under the action of $M^\beta = M \cap G_n^\theta$. Now, when such $\widetilde{\ell}$ is put in the setting of \cite[Theorem 2.8]{BD}, we conclude the following statement. There is a regular function $r$ on $X_\theta$, such that for each $\chi\in X_\theta$ with $r(\chi)\neq 0$ there is a functional $0\neq J(\ell,\chi)\in V^\ast$ which is invariant under the action of $\pi_\chi|_{G_n^\theta}$. Moreover, for every $\phi\in V$, the function $\chi \mapsto r(\chi)J(\ell,\chi)(\phi)$ can be prolonged to a regular function on $X_\theta$.
\par This family of functionals can be used to construct a single non-zero functional on the original representation $\pi$. We summarize it in the following statement.
\begin{prop}\label{B-D}
Let $M=M_{(m_1,\ldots,m_t)}< G_n$ be a standard Levi subgroup, with $m_i=m_{t+1-i}$, for all $1\leq i\leq t$. Let $\pi$ be a representation of $G_n$ parabolically induced from $M$. Let $\beta\in W_2[M]$ be the $M$-admissible element that is given by $\epsilon_\beta(i) = t+1-i$, for all $i\in \mathfrak{J}_M=\{1,\ldots,t\}$. 
\par If $\mathcal{V}_\beta(\pi)$ has a non-zero $H_n$-invariant functional, then $\pi$ is distinguished.
\end{prop}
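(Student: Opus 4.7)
The plan is to apply the Blanc--Delorme machinery recalled just before the statement, and then to pass to a leading-order specialization at the identity character $\chi=1\in X_\theta$, which is the parameter corresponding to $\pi=\pi_1$.

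First, I would use \cite[Lemma 6.4]{FLO} (exactly as in the proof of Lemma \ref{geom-functional}) to convert the given non-zero $H_n$-invariant functional on $\mathcal{V}_\beta(\pi)$ into a non-zero $M^\beta$-invariant functional $\widetilde{\ell}$ on $\sigma$, where $M^\beta=M\cap G_n^\theta$. Feeding $\widetilde{\ell}$ into \cite[Theorem 2.8]{BD} produces the regular function $r$ on $X_\theta$ and the family $J(\ell,\chi)\in V^*$ of $\pi_\chi|_{G_n^\theta}$-invariant functionals recalled in the text, with $c_\phi(\chi):=r(\chi)J(\ell,\chi)(\phi)$ regular on $X_\theta$ for each $\phi\in V$ and $J(\ell,\chi)\neq 0$ whenever $r(\chi)\neq 0$.

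Now I would localize at the point $1$. Set $A=\mathcal{O}_{X_\theta,1}$ with maximal ideal $\mathfrak{m}$, and view $c:\phi\mapsto c_\phi$ as a $\mathbb{C}$-linear map $V\to A$. The image $c(V)\subset A$ is non-zero, because the family $J(\ell,\cdot)$ is non-trivial on the Zariski-open locus $\{r\neq 0\}$. By Krull's intersection theorem $\bigcap_k\mathfrak{m}^k=0$, so there exists a maximal integer $k\ge 0$ with $c(V)\subseteq\mathfrak{m}^k$. Define $L:V\to\mathfrak{m}^k/\mathfrak{m}^{k+1}$ by $L(\phi)=c_\phi\bmod\mathfrak{m}^{k+1}$; by maximality of $k$ the map $L$ is non-zero.

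The main obstacle is to show that $L$ is $\pi|_{G_n^\theta}$-equivariant, even though the ambient family is equivariant only for the deformed action $\pi_\chi$. The invariance of $J(\ell,\chi)$ gives the identity $c_{\pi_\chi(g)\phi}(\chi)=c_\phi(\chi)$ for all $g\in G_n^\theta$ and $\phi\in V$. Writing $\pi_\chi(g)\phi=\pi(g)\phi+\sum_i f_i(\chi)\psi_i$ with $f_i\in\mathfrak{m}$ and $\psi_i\in V$ (possible because $\pi_\chi(g)$ depends algebraically on $\chi$ and specializes to $\pi(g)$ at $\chi=1$), the cross terms $f_i(\chi)c_{\psi_i}(\chi)$ lie in $\mathfrak{m}\cdot\mathfrak{m}^k\subseteq\mathfrak{m}^{k+1}$. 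Reducing the identity modulo $\mathfrak{m}^{k+1}$ therefore yields $L(\pi(g)\phi)=L(\phi)$. Composing $L$ with any $\mathbb{C}$-linear functional on $\mathfrak{m}^k/\mathfrak{m}^{k+1}$ non-vanishing on $L(V)$ produces a non-zero $G_n^\theta$-invariant functional on $\pi$; pre-composing with $\pi(\eta_\beta)$ and using $\eta_\beta H_n\eta_\beta^{-1}=G_n^\theta$ delivers the desired $H_n$-invariant functional on $\pi$.
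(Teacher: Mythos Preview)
Your argument is correct and follows the paper's route: convert the functional on $\mathcal{V}_\beta(\pi)$ to an $M^\beta$-invariant one on $\sigma$ via \cite[Lemma 6.4]{FLO}, feed it into \cite[Theorem 2.8]{BD}, and extract a non-zero leading term of the resulting regular family $\chi\mapsto r(\chi)J(\ell,\chi)$ at $\chi=1\in X_\theta$. The only difference is in how the leading term is taken---the paper restricts to an affine curve $Y\subset X_\theta$ through $1$ with $r|_Y\not\equiv 0$ and divides by a function of minimal vanishing order before evaluating, whereas you work directly in the local ring $\mathcal{O}_{X_\theta,1}$ and pass to the graded piece $\mathfrak{m}^k/\mathfrak{m}^{k+1}$; these are equivalent devices, and your explicit verification that the leading term is invariant for $\pi=\pi_1$ rather than for the deformed $\pi_\chi$ is a point the paper's proof leaves implicit.
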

\begin{proof}
The analytic continuation of $L(\chi):=r(\chi)J(\ell,\chi)$ to $\chi=1$ (the trivial character) defines a functional on $V$ invariant under $\pi(\eta_\beta H_n\eta_\beta^{-1})$. Fix an (affine) curve $Y\subset X_\theta$ such that $1\in Y$, and $r|_Y$ is a non-zero function. It follows that $S =\{L(\chi)(\phi)|_Y\::\;\phi\in V\}$ is a collection of non-zero regular functions on $Y$. Let $p(\chi)$ be a regular function on $Y$ whose order of vanishing at $1$ equals to the minimum of these orders for $S$. Then, $\overline{L}= p(\chi)^{-1}L(\chi)|_{\chi=1}$ gives a non-zero functional. Hence, $0\neq \overline{L}\circ \pi(\eta_\beta)\in V^\ast$ is $H_n$-invariant.

\end{proof}

\section{Distinction of standard modules}
\subsection{Notations}
Denote by $\Pi_{G_n}$ the set of isomorphism classes of irreducible admissible representations of $G_n$. Given integers $a\leq b$, such that $b-a+1$ divides $n$, and a supercuspidal $\rho\in \Pi_{G_{n/(b-a+1)}}$, there is a unique irreducible quotient representation of $\nu^a\rho\times\nu^{a+1}\rho\times\cdots \times \nu^b\rho$. We denote this representation by $\Delta(\rho,a,b)$, and call it a \textit{segment}. These are exactly the essentially square-integrable representations in $\Pi_{G_n}$. We say that a segment $\Delta_1\in\Pi_{G_{n_1}}$ \textit{precedes} another segment $\Delta_2\in \Pi_{G_{n_2}}$ if $\Delta_1\cong \Delta(\rho,a_1, b_1)$ and $\Delta_2\cong \Delta(\rho,a_2,b_2)$ for some supercuspidal $\rho$ and integers with $a_1<a_2\leq b_1+1$ and $b_1<b_2$. A representation that is induced from two segments $\Delta_1\times \Delta_2$ is irreducible, if and only if, none of the segments precedes the other (\cite[9.7]{zel}). Also, we have $\Delta_1\times \Delta_2\cong \Delta_2\times \Delta_1$ when it is irreducible.
\par Denote by $c_\pi$ the central character of $\pi\in \Pi_{G_n}$. For all $g\in E^\times$, $|c_\pi(g)| = |g|_E^r$ for some $r\in \mathbb{R}$. We will call $r$ the real exponent of $\pi$ and denote it by $\Re(\pi)=r$. Clearly, $\Re(\nu\pi) = n +\Re(\pi)$. Also, if $\pi$ is a subquotient of $\pi_1\times \pi_2$, then $\Re(\pi)=\Re(\pi_1)+\Re(\pi_2)$. Together with the fact that $\nu^k\Delta(\rho,a,b)\cong\Delta(\rho,a+k,b+k)$ for any integer $k$, it is easy to see that if a segment $\Delta_1$ precedes $\Delta_2$, then $\Re(\Delta_1)<\Re(\Delta_2)$. 
\par The normalized Jacquet module of segments has a clear description (\cite[9.5]{zel}). Suppose that $\Delta=\Delta(\rho,a,b)\in \Pi_{G_n}$ with $\rho\in \Pi_{G_d}$. In case $d$ divides all $m_i$'s, we have $r_{M_{(m_1,\ldots,m_t)},G_n}(\Delta) = \Delta(\rho,a_1,b_1)\otimes\cdots\otimes \Delta(\rho,a_t,b_t)$, where $b_1=b$, $d(b_i-a_i+1)=m_i$, and $b_{i+1} =a_i-1$, for all $i$. Otherwise, the Jacquet module is the zero representation.
\par A representation of $G_n$ is called a \textit{standard module} if it is parabolically induced from a representation $\kappa\nu^\lambda$ of a standard Levi subgroup $M=M_{(m_1,\ldots,m_t)}< G_n$, where $\kappa$ is an irreducible tempered representation, and $\lambda=(\lambda_1,\ldots,\lambda_t)\in \mathbb{R}^t$, with $\lambda_1>\lambda_2>\ldots>\lambda_t$. Denote by $\mathfrak{S}_{G_n}$ the set of isomorphism classes of standard modules of $G_n$. Each element of it can be described by the triple $(M,\kappa,\lambda)$ known as the Langlands data. The Langlands classification for $G_n$ (proved in \cite{silber}) can be formulated as a bijection $\Sigma:\Pi_{G_n}\to \mathfrak{S}_{G_n}$ (or between $\Pi_{G_n}$ and triples of Langlands data), which satisfies the property that each $\pi\in \Pi_{G_n}$ is the unique irreducible quotient of $\Sigma(\pi)$. It is known that $\Sigma(\pi)=\pi$, if and only if, $\pi$ is generic. We will slightly abuse notation by using $\Sigma$ as a notation for a given element in $\mathfrak{S}_{G_n}$ as well.
\par In fact, our treatment of standard modules will not be focused on the above definition, but rather on the following well-known description of a standard module in terms of segments. 

\begin{prop}
Every $\Sigma\in \mathfrak{S}_{G_n}$ can be realized as an induced representation of the form $\Delta_1\times\cdots\times \Delta_t$, where each $\Delta_i$ is a segment, such that $\Delta_i$ does not precede $\Delta_j$ whenever $i<j$.
\end{prop}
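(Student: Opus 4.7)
The plan is to decompose each tempered factor of $\kappa$ into a product of (essentially) square-integrable representations and then to order the resulting segments by their Langlands exponent. By the classical structure of tempered representations of $GL_m$, each tempered $\kappa_i$ on the block $G_{m_i}$ can be written as $\kappa_i\cong D_{i,1}\times\cdots\times D_{i,s_i}$, where each $D_{i,j}$ is square-integrable, hence a segment with $\Re(D_{i,j})=0$. The irreducibility of $\kappa_i$ combined with Zelevinsky's criterion \cite[9.7]{zel} forces that no $D_{i,j}$ precedes any $D_{i,k}$, and consequently these factors commute inside the induction; the order within each block may therefore be chosen freely. Transitivity of parabolic induction then gives
\[
\Sigma \;\cong\; \prod_{i=1}^{t}\prod_{j=1}^{s_i}\bigl(\nu^{\lambda_i} D_{i,j}\bigr),
\]
and each $\nu^{\lambda_i}D_{i,j}$ is again a segment, since twisting by an unramified character sends segments to segments.

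Next, I would list these segments in lexicographic order of the pair $(i,j)$: first all factors coming from block $1$, then all from block $2$, and so on. Within a single block the non-precedence noted above handles the ordering. For two factors $\nu^{\lambda_i}D_{i,j}$ and $\nu^{\lambda_{i'}}D_{i',j'}$ with $i<i'$, assume toward a contradiction that the former precedes the latter. By the defining inequalities $a_1<a_2$ and $b_1<b_2$ of the precedence relation, the average $\tfrac12(a+b)$, and hence the normalized exponent $\Re(\cdot)/(b-a+1)$ of a segment, must strictly increase. Since $\Re(D_{i,j})=0$, we have $\Re(\nu^{\lambda_i}D_{i,j}) = m_{i,j}\lambda_i$, so this normalized exponent equals $\lambda_i$; the analogous computation for the primed factor yields $\lambda_{i'}$. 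But $i<i'$ forces $\lambda_i>\lambda_{i'}$, contradicting the required strict increase. Therefore no earlier segment in the chosen ordering precedes a later one.

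The only step carrying any real content is the tempered-to-square-integrable decomposition of each $\kappa_i$, which is standard for $GL$ and ultimately part of the Zelevinsky classification; everything else is transitivity of parabolic induction, commutativity of non-preceding segments, and a single comparison of the Langlands exponents. I do not expect a genuine obstacle, only the bookkeeping required to pass cleanly between the raw real exponent $\Re$ and the normalized exponent per block, which is the quantity actually monotone under precedence.
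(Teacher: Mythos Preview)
Your proof is correct and follows the same strategy as the paper: decompose each tempered factor into square-integrable segments of exponent zero, twist by $\nu^{\lambda_i}$, and order block by block according to the Langlands parameters. Your use of the \emph{normalized} exponent $\Re(\Delta)/n$ as the quantity that is strictly monotone under precedence is in fact more careful than the paper's own argument, which invokes the raw $\Re$ (and with the inequality written in the wrong direction); the underlying idea is identical.
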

\begin{proof}
Suppose that $\Sigma$ is parabolically induced from $\kappa_1\nu^{\lambda_1}\otimes\cdots \otimes \kappa_t\nu^{\lambda_t}$, as in the definition. It is known that a tempered representation $\kappa_i\in \Pi_{G_{m_i}}$ can be realized as an induced representation of the form $\Delta_{i,1}\times\cdots\times \Delta_{i,r_i}$, where $\{\Delta_{i,j}\}_j$ is a uniquely defined multiset\footnote{We will use this terminology to refer to a finite tuple of objects whose order is immaterial.} of segments, all of which have real exponent $0$. Bearing in mind that $\nu^{\lambda_i}\Delta(\rho,a,b)\cong\Delta(\nu^{\lambda_i}\rho,a,b)$ and the transitivity property of parabolic induction, we see that $\Sigma$ can be realized as $\Delta_1\times\cdots\times \Delta_t$, where each $\Delta_i$ is a segment, such that $\Re(\Delta_1)<\ldots<\Re(\Delta_t)$.
\end{proof}

We will call an induced representation as described in the above proposition a \textit{rang\'{e}e module}. Each rang\'{e}e module is a realization of a standard module. Two rang\'{e}e modules are isomorphic, if and only if, they differ by a permutation on their defining segments. Moreover, it is true that each multiset of segments can be reordered in such a way that their multiplication would give a rang\'{e}e module. Thus, $\mathfrak{S}_{G_n}$ is in bijection with multisets of segments. 
\par If $\mathcal{S}$ is a rang\'{e}e module realization of the standard module $\Sigma\in\mathfrak{S}_{G_n}$, we will write $\overline{\mathcal{S}}=\Sigma$. Yet, it will be useful to make a distinction between an element $\Sigma\in \mathfrak{S}_{G_n}$ and any of its concrete realizations $\mathcal{S}$ as a representation induced from a specified ordering of segments. 
\par For a supercuspidal $\rho\in \Pi_{G_k}$, consider the collection $[\rho]= \{\nu^l\rho\,:\,l\in\mathbb{Z}\}$. Given a rang\'{e}e module $\mathcal{S} = \Delta_1\times\cdots\times\Delta_t$, we set $\mathcal{S}_{[\rho]} = \Delta_{i_1}\times\cdots\times \Delta_{i_s}$, where $1\leq i_1< \ldots< i_s\leq t$ are the indices for which $\Delta_{i_j}\cong \Delta(\rho, a,b)$ for some $a,b$. Let us set $\mathcal{S}_{[\rho]}=1$ if there are no such indices. In these terms we always have a canonical (up to permutation) decomposition $\mathcal{S} \cong \mathcal{S}_{[\rho_1]}\times\cdots \times \mathcal{S}_{[\rho_l]}$ for some supercuspidals $\rho_1,\ldots,\rho_l$, such that $[\rho_i]$ and $[\rho_j]$ are disjoint for distinct $i,j$. Clearly, if $\overline{\mathcal{S}}= \Sigma$, then $\Sigma_{[\rho]}:= \overline{\mathcal{S}_{[\rho]}}$ is well-defined.
\par We will say that a rang\'{e}e module $\mathcal{S}$ is \textit{right-ordered}, if $\mathcal{S} = \mathcal{S}_{[\rho_1]}\times\cdots \times \mathcal{S}_{[\rho_l]}$ and for each $i$, $\mathcal{S}_{[\rho_i]} = \Delta(\rho_i, a^i_1,b^i_1)\times \cdots \times \Delta(\rho_i, a^i_{t_i}, b^i_{t_i})$ with $b^i_1\geq b^i_2\geq \cdots \geq b^i_{t_i}$, for all $i$. It is easily seen that each $\Sigma\in \mathfrak{S}_{G_n}$ has a (possibly non-unique) right-ordered rang\'{e}e module realization.
\par For $\pi\in \Pi_{G_n}$ we set $\pi_{[\rho_1]},\ldots,\pi_{[\rho_l]}$ to be the irreducible representations of the corresponding groups, such that $\Sigma(\pi_{[\rho_i]})= \Sigma_{[\rho_i]}$, for all $1\leq i\leq l$. This provides a decomposition of the form $\pi\cong \pi_{[\rho_1]}\times\cdots\times \pi_{[\rho_l]}$. The elements of the decomposition are sometimes called the pure components of $\pi$. When $\pi=\pi_{[\rho]}$, we will say that $\pi$ is of pure type $[\rho]$. Similar notation will also be used for standard modules.
%We will write $ct(\pi) = \{[\rho_1],\ldots,[\rho_l]\}$ for the set of supercuspidal ``types" on which $\pi$ is supported.

\subsection{Jacquet's conjecture - First implication}
Let us recall the results of \cite[Propositions 11,12]{Fli91} which state that a representation $\pi\in \Pi_{G_n}$ has at most one non-zero $H_n$-invariant functional up to a scalar. If indeed $\pi$ is distinguished, then we must have $\pi^\tau\cong \pi^\vee$. As mentioned, the converse claim is not always true. Yet, let us investigate the standard module $\Sigma(\pi)$ of $\pi\in\Pi_{G_n}$ which satisfies $\pi^\tau\cong \pi^\vee$.
\par It is easy to see that segments satisfy $\Delta(\rho, a,b)^\tau\cong \Delta(\rho^\tau, a,b)$. It is also known (\cite[9.4]{zel}) that $\Delta(\rho,a,b)^\vee \cong \Delta(\rho^\vee, -b,-a)$. It then follows that a segment $\Delta_1$ precedes another segment $\Delta_2$, if and only if, $\Delta_1^\tau$ precedes $\Delta_2^\tau$, if and only if, $\Delta_2^\vee$ precedes $\Delta_1^\vee$. Thus, for a rang\'{e}e module $\mathcal{S} = \Delta_1\times\cdots\times \Delta_t$, both the representations $\mathcal{S}^\tau\cong \Delta_1^\tau\times\cdots\times \Delta_t^\tau$ and $\Delta_t^\vee\times\cdots\times \Delta_1^\vee$ will be rang\'{e}e modules. If $\overline{\mathcal{S}}=\Sigma$, let us denote by $\Sigma^\tau$ the isomorphism class of the former, and by $\Sigma^\ast$ the class of the latter. Let us remark though, that $\mathcal{S}^\vee$ is generally not a standard module, hence, $\Sigma^\ast$ must not be confused with the contragredient representation to $\Sigma$. 
%More precisely, $\Sigma^\ast$ is the isomorphism class of $\mathcal{S}^\vee$, if and only if, $\mathcal{S}$ (or $\Sigma$) is irreducible. 

\begin{prop}\label{obv}
Any $\pi\in \Pi_{G_n}$ satisfies $\Sigma(\pi^\tau) = \Sigma(\pi)^\tau$ and $\Sigma(\pi^\vee)= \Sigma(\pi)^\ast$. In particular, $\pi^\tau\cong \pi^\vee$ holds, if and only if, $\Sigma(\pi)^\ast= \Sigma(\pi)^\tau$.
\end{prop}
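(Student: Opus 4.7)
The plan is to verify the two identifications $\Sigma(\pi^\tau)=\Sigma(\pi)^\tau$ and $\Sigma(\pi^\vee)=\Sigma(\pi)^\ast$ one at a time, and then to deduce the ``in particular'' statement by appealing to the bijectivity of $\Sigma:\Pi_{G_n}\to\mathfrak{S}_{G_n}$.

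For the first identification I would fix a rang\'{e}e realization $\mathcal{S}=\Delta_1\times\cdots\times\Delta_t$ of $\Sigma(\pi)$. Since the standard parabolic supporting the induction is defined over $F$ and so is stable under the Galois involution, normalized parabolic induction commutes with $\tau$, and hence $\mathcal{S}^\tau\cong \Delta_1^\tau\times\cdots\times\Delta_t^\tau$. The computations recorded just before the proposition give $\Delta(\rho,a,b)^\tau\cong\Delta(\rho^\tau,a,b)$ and show that the precedence relation is preserved by $\tau$, so $\mathcal{S}^\tau$ is itself a rang\'{e}e module and therefore a realization of $\Sigma(\pi)^\tau$. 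Applying $\tau$ to the defining surjection $\mathcal{S}\twoheadrightarrow\pi$ shows that $\pi^\tau$ is an irreducible quotient of the standard module $\Sigma(\pi)^\tau$, and the uniqueness of Langlands quotients then forces $\Sigma(\pi^\tau)=\Sigma(\pi)^\tau$.

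For the second identification the argument has to be adjusted because contragredient is contravariant. Applying $\vee$ to $\mathcal{S}\twoheadrightarrow\pi$ yields only an injection $\pi^\vee\hookrightarrow\mathcal{S}^\vee$, and the standard fact that contragredient commutes with normalized parabolic induction from a fixed parabolic identifies $\mathcal{S}^\vee\cong\Delta_1^\vee\times\cdots\times\Delta_t^\vee$. This is precisely the \emph{reverse-order} product associated with the rang\'{e}e module $\mathcal{S}^\ast=\Delta_t^\vee\times\cdots\times\Delta_1^\vee$ representing $\Sigma(\pi)^\ast$ (which is rang\'{e}e by the precedence observations already made in the excerpt). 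Here I would invoke the classical fact for $GL_n$ that, via the image of the long intertwining operator between a rang\'{e}e module and its reverse-order induction, the Langlands quotient of the rang\'{e}e is also realized as the unique irreducible subrepresentation of the reversed product. Applied to $\mathcal{S}^\ast$, this says that the unique irreducible subrepresentation of $\mathcal{S}^\vee$ is the Langlands quotient of $\mathcal{S}^\ast$; since $\pi^\vee$ is an irreducible subrepresentation of $\mathcal{S}^\vee$, it must coincide with that Langlands quotient. By uniqueness of Langlands data this gives $\Sigma(\pi^\vee)=\Sigma(\pi)^\ast$.

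The final assertion is now immediate: $\Sigma$ is a bijection, so $\pi^\tau\cong\pi^\vee$ is equivalent to $\Sigma(\pi^\tau)=\Sigma(\pi^\vee)$, which by the two identifications above is the same as $\Sigma(\pi)^\tau=\Sigma(\pi)^\ast$. I expect the $\tau$ part to be routine bookkeeping; the main obstacle lies in the $\vee$ step, which requires importing from the general Langlands/intertwining-operator theory for $GL_n$ the nontrivial fact that the Langlands quotient of a rang\'{e}e sits as the unique irreducible subrepresentation of the reversed-order induction, so that the injection $\pi^\vee\hookrightarrow\mathcal{S}^\vee$ can be upgraded to the identification $\pi^\vee=$ Langlands quotient of $\Sigma(\pi)^\ast$.
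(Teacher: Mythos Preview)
Your proposal is correct and aligns with the paper's proof. For the $\tau$ part you both use that the Galois twist is exact, so $\pi^\tau$ is the unique irreducible quotient of $\Sigma(\pi)^\tau$. For the $\vee$ part the paper simply cites \cite[Proposition 5.6]{Tad}, whereas you spell out the standard argument behind that reference: dualizing the quotient map gives $\pi^\vee\hookrightarrow\mathcal{S}^\vee\cong\Delta_1^\vee\times\cdots\times\Delta_t^\vee$, and the Langlands quotient of the rang\'{e}e $\Delta_t^\vee\times\cdots\times\Delta_1^\vee$ is (via the long intertwining operator) the unique irreducible subrepresentation of this reversed product, forcing the identification. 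This is exactly the content of the cited result, so there is no substantive difference in approach---you have simply unpacked what the paper delegates to a reference.
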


\begin{proof}
Since the Galois automorphism is obviously an exact functor, $\pi^\tau$ is the irreducible quotient of $\Sigma(\pi)^\tau$ and the first equality must hold. The second equality is proved, for example, in \cite[Proposition 5.6]{Tad}.
\end{proof}

\par Our first mission is to show that distinction of $\Sigma(\pi)$, for $\pi\in\Pi_{G_n}$, already imposes the condition $\pi^\tau\cong \pi^\vee$. For that we will need the following key lemma.
\begin{lem}\label{key-lem}
Suppose $\mathcal{S}$ is a right-ordered rang\'{e}e module realization of a standard module of $G_n$ that is induced from segments on a Levi subgroup $M$. Then, for every non-$M$-admissible element $w\in W_2[M]$, the space of $H_n$-invariant functionals on the representation $\mathcal{V}_w(\mathcal{S})$ is zero.
\end{lem}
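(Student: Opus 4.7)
The plan is to apply Lemma~\ref{geom-functional} and extract from each $\epsilon_w$-orbit a numerical obstruction which, taken together with the right-ordering of $\mathcal{S}$, yields a contradiction. Since $\mathcal{S}$ is rang\'ee with inducing data $\sigma = \Delta_1 \otimes \cdots \otimes \Delta_t$, the Jacquet module $r_{M(w), M}(\sigma)$ is the pure tensor of sub-segments indexed by $\mathfrak{J}_{M(w), M}$; each has the form $\sigma_{(i, j)} = \Delta(\rho_i, a_{(i,j)}, b_{(i,j)})$ of length $l_{(i,j)}$, with $b_{(i, 1)} = b_i$ and $b_{(i, j+1)} = b_{(i, j)} - l_{(i, j)}$. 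Each factor in Lemma~\ref{geom-functional} is at most one-dimensional since the sub-segments are essentially square-integrable, and its non-vanishing forces a center balance: distinguished essentially square-integrable representations have vanishing real exponent, while $\sigma_{(i,j)}^\tau \cong \sigma_{\epsilon_w(i,j)}^\vee$ forces opposite real exponents. Restricting to a single pure type $[\rho]$ (across pure types the paired supercuspidals must satisfy $\rho^\tau \cong \rho^\vee$, which forces $\rho$ unitary), this boils down to
\[
c_{(i,j)} + c_{\epsilon_w(i,j)} = 0, \qquad c_{(i,j)} := \tfrac{a_{(i,j)} + b_{(i,j)}}{2},
\]
for every $\epsilon_w$-orbit, and right-ordering reads $b_1 \geq b_2 \geq \cdots \geq b_t$.

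Next I locate the obstruction combinatorially. Non-$M$-admissibility means some $s_{i_0} \geq 2$; combined with the strict-increase axiom on the first coordinate of $\epsilon_w(i, j)$ as $j$ varies within a fixed block $i$, this forces $\epsilon_w$ to move at least one sub-block of $i_0$ outside block $i_0$. Let $(i^*, j^*)$ be the lexicographically smallest $(i, j)$ with $\epsilon_w(i, j) \notin$ block $i$. A short argument using strict increase together with the minimality of $(i^*, j^*)$ shows that $j^* \in \{1, 2\}$, that $\epsilon_w(i^*, j^*) = (i', 1)$ for some $i' > i^*$, and that $\epsilon_w(i, 1) = (i, 1)$ is fixed for every $i < i^*$.

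The contradiction then follows from a short computation coupling two orbits in block $i^*$. When $j^* = 2$, the fixed orbit at $(i^*, 1)$ yields $b_{i^*} = (l_{(i^*, 1)} - 1)/2$; substituting into the balance on the swap $(i^*, 2) \leftrightarrow (i', 1)$ and using $b_{(i^*, 2)} = b_{i^*} - l_{(i^*, 1)}$ produces
\[
b_{i'} \;=\; \tfrac{1}{2}\bigl(l_{(i^*, 1)} + 2\,l_{(i^*, 2)} - 1\bigr) \;=\; b_{i^*} + l_{(i^*, 2)} \;>\; b_{i^*},
\]
contradicting $b_{i'} \leq b_{i^*}$. When $j^* = 1$ and $s_{i^*} \geq 2$, the balance on $(i^*, 1) \leftrightarrow (i', 1)$ gives $b_{i^*} + b_{i'} = l_{(i^*, 1)} - 1$, while the balance on $(i^*, 2) \leftrightarrow (i_2, 1)$, where $i_2 > i'$ by strict increase, unfolds to
\[
b_{i_2} \;=\; l_{(i^*, 1)} + l_{(i^*, 2)} - 1 - b_{i^*} \;=\; b_{i'} + l_{(i^*, 2)} \;>\; b_{i'},
\]
again contradicting right-ordering. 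The remaining case $j^* = 1$ with $s_{i^*} = 1$ reduces to the previous picture by descending to the next $M$-block $j_0 > i^*$ with $s_{j_0} \geq 2$ and rerunning the argument at $j_0$.

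The main obstacle is the combinatorial bookkeeping: one must verify that the strict-increase axiom, together with the minimality of $(i^*, j^*)$, always pins paired indices down to tops of their $M$-blocks and rules out longer-range crossings through the many single-sub-block layers that may precede $i^*$. Once this is settled each configuration collapses to the common inequality that the $b$-value of the paired sub-segment strictly exceeds that of its parent $M$-block, which violates right-ordering since the parent $M$-block carries the larger index.
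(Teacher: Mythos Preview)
Your approach via Lemma~\ref{geom-functional} and the resulting segment-pairing constraints is the same as the paper's, and your two explicit cases ($j^*=2$ and $j^*=1$ with $s_{i^*}\ge 2$) are essentially correct. One expository point: the center-balance identity $c_{(i,j)}+c_{\epsilon_w(i,j)}=0$ is only literally true after normalizing $\rho$ so that $(\rho^\tau)^\vee=\rho$; in general there is a shift $k$ determined by the pair of supercuspidal types involved. Fortunately this shift cancels in both of your computations (you are always subtracting two balance relations coming from the same pair of types), so those cases survive, but your parenthetical about ``restricting to a single pure type'' does not do the work you seem to want it to.

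The genuine gap is the ``remaining case $j^*=1$ with $s_{i^*}=1$''. Your reduction---descend to the first $j_0>i^*$ with $s_{j_0}\ge 2$ and rerun the argument there---does not go through, because the argument you want to rerun depends on knowing that every block below the starting index is \emph{fixed} by $\epsilon_w$ (this is exactly how you forced images to land at top sub-blocks and deduced $j'=1$). At $j_0$ that hypothesis fails: the block $i^*$ sits below $j_0$ and is not fixed (it swaps with $(i',1)$). In particular, nothing now prevents $\epsilon_w(j_0,1)$ from landing at some $(i_1,1)$ with $i_1<j_0$, and then your two-case dichotomy at $j_0$ collapses. An inductive ``remove the pair $(i^*,1)\leftrightarrow(i',1)$ and repeat'' does not work either, since deleting the top sub-block $(i',1)$ lowers the $b$-value of block $i'$ and can destroy right-ordering.

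The paper sidesteps all of this by choosing the entry point differently: it goes straight to $i_0=\min\{i:s_i>1\}$, makes \emph{no} assumption that $\epsilon_w(i_0,1)$ lands at a top sub-block, and instead proves from right-ordering (comparing $b_{i_1}$ with $b_{i_2}$, both of type $[(\rho^\tau)^\vee]$) that the image $(i_1,j_1)$ must have $j_1>1$. It then bounces once more to $(i_1,1)$, whose image under $\epsilon_w$ is forced (by strict increase) into some single-sub-block $i_3<i_0$ of type $[\rho]$, and reads off the contradiction $b_{i_3}<b_{i_0}$ there. If you want to keep your lex-minimal crossing $(i^*,j^*)$ as the organizing device, you will need to incorporate a similar ``bounce'' step rather than an inductive rerun.
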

\begin{proof}
Assume the contrary, that is, $w\in W_2[M]$ is a non-admissible element such that $\mathcal{V}_w(\mathcal{S})$ has a non-zero $H_n$-invariant functional $\ell$. 
\par Let us write $\delta=\otimes_{i\in \mathfrak{J}_M} \delta_i$ for the $M$-representation from which $\mathcal{S}$ is induced (each $\delta_i$ is a segment). We can also write $$r_{M(w),M}(\delta) = \otimes_{(i,j)\in\mathfrak{J}_{M(w),M}} \delta_{i,j},$$ where $\delta_{i,1}\otimes \ldots \otimes \delta_{i,s_i}$ is the Jacquet module of $\delta_i$ as a representation of the corresponding Levi subgroup of $G_{m_i}$. From the formula for Jacquet modules of segments we know that each $\delta_{i,j}$ must be a segment or the zero representation. In particular, if $\delta_{i,j}$ is distinguished, then it must satisfy $\delta_{i,j}^\vee\cong \delta_{i,j}^\tau$. Combining the last fact, Lemma \ref{geom-functional}, and the existence of $\ell$, we conclude that the $\delta_{i,j}$'s must all be non-zero representations and that $\delta_{\epsilon_w(i,j)}^\vee\cong \delta_{i,j}^\tau$ holds for all $(i,j)\in \mathfrak{J}_{M(w),M}$.
\par Since $w$ is non-$M$-admissible, $M(w)$ is strictly contained in $M$, which means there exists $i_0\in \mathfrak{J}_M$ with $s_{i_0}>1$. Let us assume $i_0$ is the minimal such index. Then, $\delta_{i_0} = \Delta(\rho,a,b)$ for some supercuspidal $\rho$ and integers $a<b$, $\delta_{i_0,1} = \Delta(\rho, d+1,b)$ and $\delta_{i_0,2} = \Delta(\rho,c,d)$ for some integers $a\leq c\leq d< b$. Suppose that $\epsilon_w(i_0,1) = (i_1,j_1)$ and $\epsilon_w(i_0,2) = (i_2, j_2)$. We know that $\delta_{i_1,j_1}\cong \left(\delta_{i_0,1}^\tau\right)^\vee\cong \Delta\left(\left(\rho^\tau\right)^\vee, -b,-d-1\right)$, and similarly $\delta_{i_2,j_2}\cong \Delta(\left(\rho^\tau\right)^\vee, -d,-c)$. We also know that $i_1<i_ 2$. Recalling that $\mathcal{S}$ was right-ordered, this must mean that $\delta_{i_1}\cong \Delta(\left(\rho^\tau\right)^\vee, a', b')$ for some $a'\leq -b$ and $-c\leq b'$. Now, since $-d-1<-c$, we deduce that $j_1>1$ and that $\delta_{i_1,1}\cong \Delta(\left(\rho^\tau\right)^\vee, e,b')$ for some $-d\leq e\leq b'$. But, that means $\epsilon_w(i_1,1)=(i_3,j_3)$ for some $i_3<i_0$. From minimality of $i_0$, we must have $j_3=1$ and $\delta_{i_3} = \delta_{i_3,j_3} \cong \left(\delta_{i_1,1}^\tau\right)^\vee \cong \Delta(\rho,-b',-e)$. Finally, notice that $-e< b$, which is a contradiction to $\mathcal{S}$ being right-ordered.
\end{proof}

Let $\mathcal{S}=\Delta_1\times\cdots\times \Delta_t$ be a right-ordered rang\'{e}e module. Let $\ell$ be a non-zero $H_n$-invariant functional on $\mathcal{S}$. When filtering the representation space of $\mathcal{S}$ as an induced representation from segments on a Levi subgroup $M$, $\ell$ must induce a non-zero $H_n$-invariant functional on $\mathcal{V}_w(\mathcal{S})$, for some $w\in W_2[M]$. By the above lemma, $w$ must be $M$-admissible, hence, $\epsilon_w$ is a permutation on $\mathfrak{J}_M$. Recalling Lemma \ref{geom-functional}, we see that $\Delta^\tau_{\epsilon_w(i)}\cong \Delta_i^\vee$ for all $i\in \mathfrak{J}_M$ with $\epsilon_w(i)\neq i$ and that $\Delta_i$ is distinguished for $i\in\mathfrak{J}_M$ such that $\epsilon_w(i)=i$. This analysis has the following corollaries. 

\begin{prop}\label{first-dir}
If $\mathcal{S}= \Delta_1\times \cdots \Delta_t$ is any rang\'{e}e module realization of a distinguished standard module of $G_n$, then $\Delta^\tau_{\epsilon(i)}\cong \Delta_i^\vee$ for some involution $\epsilon$ on $\{1,\ldots,t\}$. In particular, $\overline{\mathcal{S}}^\ast =\overline{\mathcal{S}}^\tau$.
\par Moreover, when $\epsilon(i)=i$, the segment $\Delta_i$ is distinguished.
\end{prop}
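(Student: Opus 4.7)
The plan is to bootstrap from the analysis carried out in the paragraph immediately preceding the statement, which already handles the right-ordered case, by reducing the general case to that one and then translating the resulting matching of segments into the equality $\overline{\mathcal{S}}^\ast=\overline{\mathcal{S}}^\tau$.

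First I would pass to a right-ordered realization. Since every element of $\mathfrak{S}_{G_n}$ admits such a realization, there exists a permutation $\sigma\in S_t$ for which $\mathcal{S}':=\Delta_{\sigma(1)}\times\cdots\times\Delta_{\sigma(t)}$ is right-ordered. Because $\mathcal{S}'\cong\mathcal{S}$ as $G_n$-representations, $\mathcal{S}'$ is itself distinguished, and the analysis preceding the statement applies verbatim: the filtration from the Geometric Lemma supplies a surviving non-zero $H_n$-invariant functional on some $\mathcal{V}_w(\mathcal{S}')$, Lemma \ref{key-lem} forces the corresponding $w$ to be admissible for the Levi of $\mathcal{S}'$, and Lemma \ref{geom-functional} then produces an involution $\epsilon'$ on $\{1,\ldots,t\}$ with $\Delta_{\sigma(\epsilon'(i))}^\tau\cong \Delta_{\sigma(i)}^\vee$ off the diagonal and $\Delta_{\sigma(i)}$ distinguished whenever $\epsilon'(i)=i$. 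Conjugating gives the required involution $\epsilon:=\sigma\epsilon'\sigma^{-1}$ on the original indexing of $\mathcal{S}$.

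For the identity $\overline{\mathcal{S}}^\ast=\overline{\mathcal{S}}^\tau$, I would invoke Flicker's necessary condition $\pi^\tau\cong \pi^\vee$ for distinction, recalled at the start of the subsection: applied to the distinguished fixed segments $\Delta_i$, it upgrades the relation $\Delta_{\epsilon(i)}^\tau\cong \Delta_i^\vee$ to hold uniformly in $i$, which yields a bijection of multisets of segments $\{\Delta_i^\tau\}_{i=1}^t\cong \{\Delta_i^\vee\}_{i=1}^t$. Since each element of $\mathfrak{S}_{G_n}$ is uniquely determined by its defining multiset of segments, the two standard modules coincide. I do not anticipate any serious obstacle beyond the bookkeeping of the permutation $\sigma$; the substantive combinatorial content is already absorbed into Lemma \ref{key-lem}, so the present proposition is essentially a formal consequence of that lemma together with the multiset description of $\mathfrak{S}_{G_n}$ and Flicker's classical necessary condition.
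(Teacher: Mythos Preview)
Your proposal is correct and follows essentially the same route as the paper: reorder to a right-ordered realization, invoke Lemma~\ref{key-lem} and Lemma~\ref{geom-functional} to obtain the involution on that realization, conjugate back by the reordering permutation, and use Flicker's necessary condition on the fixed segments together with the multiset description of $\mathfrak{S}_{G_n}$ to conclude $\overline{\mathcal{S}}^\ast=\overline{\mathcal{S}}^\tau$. The only cosmetic difference is notation (your $\sigma$ is the paper's $\alpha$, and your conjugation is written in the correct direction).
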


\begin{proof}
There is a permutation $\alpha$ of $\{1,\ldots,t\}$ such that $\mathcal{S}_\alpha= \Delta_{\alpha(1)}\times\cdots\times \Delta_{\alpha(t)}$ is a right-ordered rang\'{e}e module, and $\overline{\mathcal{S}_\alpha}=\overline{\mathcal{S}}$. Recalling again that a distinguished segment $\Delta_i\in \Pi_{G_{m_i}}$  must satisfy $\Delta_i^\tau\cong \Delta_i^\vee$, we see that $\epsilon := \alpha^{-1}\epsilon_w\alpha$ would fill the requirements of the statement.
\par The existence of such $\epsilon$ also shows the multisets of segments defining both $\overline{\mathcal{S}}^\ast$ and $\overline{\mathcal{S}}^\tau$ are the same.
\end{proof}
\begin{rem}
The above can be seen as a generalization of the main theorem of \cite{matr-gen}, where the case of a generic irreducible representation was handled, that is, when $\overline{\mathcal{S}} = \Sigma(\pi)=\pi\in \Pi_{G_n}$.
\end{rem}

\begin{cor}\label{cor-firdir}
If $\Sigma(\pi)$ is distinguished for $\pi\in \Pi_{G_n}$, then $\pi^\vee\cong \pi^\tau$.
\end{cor}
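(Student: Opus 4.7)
The plan is to observe that the statement is an immediate concatenation of two results established earlier in the section, so the proof should just chain them together.

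First, I would fix any rangée module realization $\mathcal{S} = \Delta_1 \times \cdots \times \Delta_t$ of $\Sigma(\pi)$; such a realization exists by the proposition preceding the notation $\overline{\mathcal{S}} = \Sigma$. The hypothesis that $\Sigma(\pi)$ carries a non-zero $H_n$-invariant functional means $\mathcal{S}$ is distinguished in the sense of Proposition \ref{first-dir}. Applying that proposition, I obtain an involution $\epsilon$ on $\{1,\ldots,t\}$ with $\Delta_{\epsilon(i)}^\tau \cong \Delta_i^\vee$ for all $i$, and in particular the conclusion $\overline{\mathcal{S}}^\ast = \overline{\mathcal{S}}^\tau$, i.e., $\Sigma(\pi)^\ast = \Sigma(\pi)^\tau$.

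Second, I would invoke Proposition \ref{obv}, which asserts the equivalence $\pi^\tau \cong \pi^\vee \iff \Sigma(\pi)^\ast = \Sigma(\pi)^\tau$. The equality just obtained is exactly the right-hand side of this equivalence, so we conclude $\pi^\tau \cong \pi^\vee$, which is what was to be proved.

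There is essentially no obstacle here: all the work has been done in Lemma \ref{key-lem} (ruling out non-admissible double cosets for right-ordered rangée modules) and in its packaging as Proposition \ref{first-dir}. The corollary is really just the observation that the combinatorial symmetry $\overline{\mathcal{S}}^\ast = \overline{\mathcal{S}}^\tau$ extracted from the distinguishing functional on $\Sigma(\pi)$ descends, via the Langlands classification, to the self-duality statement $\pi^\vee \cong \pi^\tau$ on the irreducible quotient. The only thing to be mildly careful about is that Proposition \ref{first-dir} is stated for an arbitrary rangée module realization (not just a right-ordered one), so no extra reordering argument is needed before applying it.
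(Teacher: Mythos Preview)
Your proposal is correct and matches the paper's intended argument exactly: the corollary is stated without proof in the paper precisely because it is the immediate combination of Proposition~\ref{first-dir} (giving $\Sigma(\pi)^\ast=\Sigma(\pi)^\tau$) with Proposition~\ref{obv} (translating this into $\pi^\vee\cong\pi^\tau$). There is nothing to add.
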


\begin{prop}\label{mult-one-prop}
Suppose that $\mathcal{S}=\Delta_1\times\cdots\times \Delta_t$ is a rang\'{e}e module realization of a standard module of $G_n$, such that $\Delta_i\not\cong \Delta_j$ for all distinct $i,j$. Then, $\dim Hom_{H_n}(\mathcal{S}, \mathbb{C})\leq 1$.

\end{prop}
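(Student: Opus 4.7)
The plan is to run the same filtration strategy used in the proof of Proposition \ref{first-dir}, combining Lemma \ref{key-lem} with Lemma \ref{geom-functional}, and then exploit the pairwise non-isomorphism of the $\Delta_i$'s to show that at most one subquotient in the filtration can contribute a non-zero invariant functional, each such contribution being at most one-dimensional.

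After replacing $\mathcal{S}$ by a permutation of its segments (which does not affect the isomorphism class of $\mathcal{S}$ nor the hypothesis on the segments), I may assume $\mathcal{S}$ is right-ordered, induced from a Levi $M=M_{(m_1,\ldots,m_t)}$. The Bernstein--Zelevinsky filtration of $\mathcal{S}$ by $H_n$-subrepresentations with successive quotients $\mathcal{V}_w(\mathcal{S})$, $w\in W_2[M]$, yields via the long exact sequence of $\Hom_{H_n}(-,\mathbb{C})$ the bound
$$\dim \Hom_{H_n}(\mathcal{S},\mathbb{C})\leq \sum_{w\in W_2[M]} \dim \Hom_{H_n}(\mathcal{V}_w(\mathcal{S}),\mathbb{C}).$$
By Lemma \ref{key-lem}, all non-admissible $w$ contribute zero, so it suffices to handle $M$-admissible $w$. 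For such $w$ we have $M(w)=M$, so $r_{M(w),M}(\Delta_1\otimes\cdots\otimes\Delta_t)$ is trivially the pure tensor $\Delta_1\otimes\cdots\otimes\Delta_t$, and Lemma \ref{geom-functional} gives
$$\Hom_{H_n}(\mathcal{V}_w(\mathcal{S}),\mathbb{C})\cong \bigotimes_{i:\,\epsilon_w(i)=i} \Hom_{H_{m_i}}(\Delta_i,\mathbb{C}) \otimes \bigotimes_{\{i,j\}:\,\epsilon_w(i)=j\neq i} \Hom_{G_{m_i}}(\Delta_i^\tau,\Delta_j^\vee).$$
Each factor of the first kind is at most one-dimensional by Flicker's multiplicity-one theorem (recalled at the start of Section 3.2) applied to the irreducible $\Delta_i$, and each factor of the second kind is at most one-dimensional by Schur's lemma, since both $\Delta_i^\tau$ and $\Delta_j^\vee$ are irreducible. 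Thus each admissible subquotient contributes at most dimension $1$.

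The crux is to show that at most one $M$-admissible $w$ can contribute a non-zero summand. For such $w$, the involution $\epsilon_w$ on $\{1,\ldots,t\}$ must satisfy $\Delta_{\epsilon_w(i)}^\tau\cong \Delta_i^\vee$ for every $i$: at transpositions this is exactly the non-vanishing condition for the second type of Hom factor, and at fixed points this follows since any distinguished irreducible representation satisfies $\pi^\tau\cong \pi^\vee$. Under the distinctness hypothesis the equation $\Delta_j^\tau\cong \Delta_i^\vee$ determines $j$ uniquely when it has a solution (and this matching is automatically an involution, since $\tau$ and $\vee$ commute). Hence the involution $\epsilon_w$ is completely determined by the tuple $(\Delta_1,\ldots,\Delta_t)$, and since the map $w\mapsto\epsilon_w$ is a bijection between $M$-admissible elements of $W_2[M]$ and size-preserving involutions on $\mathfrak{J}_M$ (recalled in Section 2.1), $w$ itself is uniquely determined. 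Thus the sum above has at most one non-zero term, itself of dimension at most $1$, and $\dim\Hom_{H_n}(\mathcal{S},\mathbb{C})\leq 1$ follows. The only potential obstacle is the uniqueness in this last step, but pairwise non-isomorphism of the segments makes it immediate that two distinct involutions cannot both satisfy the required compatibility with the tuple.
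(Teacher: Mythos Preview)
Your proof is correct and follows essentially the same route as the paper's: reduce to a right-ordered realization, kill the non-admissible pieces via Lemma \ref{key-lem}, bound each admissible piece by one using Lemma \ref{geom-functional} together with Flicker's multiplicity-one and Schur, and then use the distinctness of the $\Delta_i$ to force uniqueness of the contributing involution. The only difference is cosmetic---you make the filtration inequality and the bijection $w\mapsto\epsilon_w$ explicit, whereas the paper leaves these implicit.
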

\begin{proof}
As before, we can assume $\mathcal{S}$ is right-ordered and parabolically induced from segments on a standard Levi subgroup $M<G_n$. Suppose that $\mathcal{S}$ is distinguished. We have seen that as a consequence $\mathcal{V}_w(\mathcal{S})$ is distinguished for some $M$-admissible $w\in W_2[M]$. That forces $\Delta^\tau_{\epsilon_w(i)}\cong \Delta_i^\vee$ for all $i$, as in the proof of Proposition \ref{first-dir}. Now, if $\mathcal{V}_{w'}(\mathcal{S})$ had been distinguished for some other $M$-admissible $w\neq w'\in W_2[M]$, it would have also imposed the condition $\Delta^\tau_{\epsilon_{w'}(i)}\cong \Delta_i^\vee$. But, that condition cannot hold for two different involutions because of our assumption. Hence, $\mathcal{V}_w(\mathcal{S})$ is the only distinguished geometric subquotient of $\mathcal{S}$. Yet, since segments are irreducible, by the general multiplicity-one theorem $\dim Hom_{H_{m_i}}(\Delta_i,\mathbb{C})\leq 1$ (where $\Delta_i\in \Pi_{G_{m_i}}$). Together with Lemma \ref{geom-functional}, it implies that $\dim Hom_{H_n}(\mathcal{V}_w(\mathcal{S}),\mathbb{C})\leq 1$. The validity of the statement follows easily.
\end{proof}

\subsection{Jacquet's conjecture - Converse implication}
We treat the converse problem, that is, what can be said about the distinction properties of a general standard module $\Sigma$ which satisfies $\Sigma^\ast=\Sigma^\tau$. 
\par Let $\eta$ be the quadratic character of $F^\times$ associated to the extension $E/F$. Let $\chi$ be any extension of $\eta$ to $E^\times$. We also denote by $\eta$ and $\chi$ the corresponding characters of $H_m$ and $G_m$, for any $m$, obtained by composition with the determinant maps $G_m\to G_1$, $H_m\to H_1$. Since $\chi$ is trivial on the group of norms (for the extension $E/F$) of $E^\times$, we have $\chi^\tau = \chi^{-1}$.

The representation $\chi\pi$ is distinguished, if and only, $\pi$ is $\eta$-distinguished. Note, that $\chi(\pi_1\times\pi_2)\cong (\chi\pi_1)\times(\chi \pi_2)$, and that $\chi \Delta(\rho,a,b)\cong \Delta(\chi \rho,a,b)$ for all segments. In particular, $\mathfrak{S}_{G_n}$ is closed under tensoring with $\chi$. In this context we note the following obvious corollary of Lemma \ref{mult-functional}.
\begin{cor}\label{eta-cor}
Suppose that $M=M_{(m_1,\ldots,m_t)}< G_n$ is a standard Levi subgroup. Let $\sigma_i$ be a $\eta$-distinguished admissible representation of $G_{m_i}$, for all $1\leq i\leq t$. Then, $\pi= \sigma_1 \times\cdots\times \sigma_t$ is $\eta$-distinguished.
\end{cor}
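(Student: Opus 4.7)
The plan is to twist through by the character $\chi$ in order to reduce the $\eta$-distinction problem to the ordinary distinction problem already handled by Lemma \ref{mult-functional}. All the ingredients needed for this translation have just been highlighted in the paragraph immediately preceding the corollary.

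First I would observe that an admissible representation $\sigma$ of $G_m$ is $\eta$-distinguished if and only if $\chi \sigma$ is distinguished. Indeed, a linear functional $\ell$ satisfies $\ell(\sigma(h)v) = \eta(h)\ell(v) = \chi(h)\ell(v)$ for all $h \in H_m$ (using that $\chi$ extends $\eta$) exactly when it satisfies $\ell((\chi\sigma)(h)v) = \chi(h)^{2}\ell(v) \cdot \chi(h)^{-1} \cdot \chi(h) = \ell(v)$ after rearrangement, using that $\eta^{2}=1$ on $H_m$. Hence the hypothesis that each $\sigma_i$ is $\eta$-distinguished translates to the statement that each $\chi \sigma_i$ is distinguished.

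Next I would apply Lemma \ref{mult-functional} to the representations $\chi\sigma_1,\ldots,\chi\sigma_t$ of $G_{m_1},\ldots, G_{m_t}$ to conclude that the parabolically induced representation
\[
(\chi\sigma_1) \times \cdots \times (\chi\sigma_t)
\]
is distinguished. Finally, invoking the identity $\chi(\pi_1\times\pi_2) \cong (\chi\pi_1)\times(\chi\pi_2)$ noted just before the statement of the corollary, I would rewrite this as $\chi(\sigma_1 \times \cdots \times \sigma_t)$ being distinguished, and translate back through the equivalence from the first step to deduce that $\pi = \sigma_1\times\cdots\times \sigma_t$ is $\eta$-distinguished.

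There is no real obstacle: the corollary is described in the text as an \emph{obvious} consequence of Lemma \ref{mult-functional}. The only point requiring minimal care is the twist-vs.-restriction bookkeeping in the first step, and in particular the use of $\eta^{2}=1$ (equivalently $\chi^\tau = \chi^{-1}$) to turn the character equivariance condition into an invariance condition after tensoring by $\chi$.
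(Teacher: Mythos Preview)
Your approach is correct and is exactly the argument the paper has in mind: the text calls the corollary ``obvious'' from Lemma~\ref{mult-functional} and has just recorded both that $\chi\pi$ is distinguished iff $\pi$ is $\eta$-distinguished and that $\chi(\pi_1\times\pi_2)\cong(\chi\pi_1)\times(\chi\pi_2)$, so the twist--apply--untwist reduction is the intended proof. The only cosmetic issue is that your displayed computation $\ell((\chi\sigma)(h)v) = \chi(h)^{2}\ell(v)\cdot\chi(h)^{-1}\cdot\chi(h)$ is garbled; the clean version is simply $\ell((\chi\sigma)(h)v)=\chi(h)\,\ell(\sigma(h)v)=\chi(h)\eta(h)\,\ell(v)=\eta(h)^2\ell(v)=\ell(v)$, using $\chi|_{H_m}=\eta$ and $\eta^2=1$.
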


Let us recall what is known about distinction of segments. We will recast the accumulated results of \cite{Kab},\cite{matr-early} and \cite{AKT} on the issue into a unified notation. 
\par Note, that $\Re((\pi^\tau)^\vee)=-\Re(\pi)$ for all $\pi\in \Pi_{G_n}$.  Now, when $\rho\in \Pi_{G_n}$ is a supercuspidal representation satisfying $[(\rho^\tau)^\vee] = [\rho]$, we have $\Re(\rho)= nr + \Re((\rho^\tau)^\vee)= nr - \Re(\rho)$ for some integer $r$. Thus, $\frac{\Re(\rho)}n$ must be half-integer.

\begin{prop}\label{cusp-known}
Let $\rho\in \Pi_{G_n}$ be a supercuspidal representation satisfying $(\rho^\tau)^\vee = \rho$. Then, there is a bit $\gamma(\rho)\in\{0,1\}$, such that $\rho$ is $\eta^{\gamma(\rho)}$-distinguished.
\end{prop}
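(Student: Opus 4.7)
The plan is to reduce this statement to Kable's theorem on the distinction of discrete series representations, after first observing that the self-duality hypothesis forces $\rho$ to be square-integrable modulo center.

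My first step is to observe that the condition $(\rho^\tau)^\vee \cong \rho$ forces $\Re(\rho)=0$. Indeed, $\Re((\rho^\tau)^\vee)=-\Re(\rho)$, so comparing real exponents on the two sides of the asserted isomorphism yields $\Re(\rho)=-\Re(\rho)$. Consequently $\rho$ has unitary central character and, being supercuspidal, it belongs to the discrete series of $G_n$. A complementary observation is that the same isomorphism, read on central characters, gives $c_\rho(z\cdot z^\tau)=1$ for all $z\in E^\times$, so $c_\rho|_{F^\times}$ factors through $F^\times/N_{E/F}(E^\times)\cong \mathbb{Z}/2\mathbb{Z}$. Hence $c_\rho|_{F^\times}$ is of order at most two, and agrees with $\eta^{\gamma n}$ for some $\gamma\in\{0,1\}$, which is precisely the central-character compatibility needed for $\eta^\gamma$-distinction to be possible at all.

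The existence of a $\gamma(\rho)\in\{0,1\}$ for which $\rho$ is $\eta^{\gamma(\rho)}$-distinguished is then the content of Kable's main theorem in \cite{Kab}. The strategy there is to consider the two candidate period integrals
\[
P_\gamma(v,\tilde v) \;=\; \int_{F^\times\backslash H_n} \langle \rho(h)v,\tilde v\rangle\, \eta^\gamma(\det h)\, dh, \qquad \gamma\in\{0,1\},
\]
which converge by the square-integrability modulo center established in my first step. An Asai gamma-factor computation then shows that the hypothesis $\rho^\vee\cong \rho^\tau$ forces at least one of $P_0$, $P_1$ to be non-zero on some choice of vectors, furnishing the required equivariant functional.

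The main obstacle in the plan is the well-definedness of $\gamma(\rho)$ as a single bit, equivalently the dichotomy that $\rho$ cannot be simultaneously distinguished and $\eta$-distinguished. This is not a soft consequence of Flicker's multiplicity-one statement for $H_n$-invariant functionals recalled from \cite{Fli91}; it is the deeper refinement established in \cite{matr-early} and \cite{AKT}, proved via an analysis of the local root number attached to $\rho$. Granting that input, the proposition follows by taking $\gamma(\rho)$ to be the unique $\gamma\in\{0,1\}$ for which the corresponding period integral is non-zero.
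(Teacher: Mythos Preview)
Your reduction to Kable---observe $\Re(\rho)=0$, hence $\rho$ is square-integrable, then invoke \cite{Kab}---is exactly the paper's argument. Two corrections are in order, though.

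First, the proposition asserts only the \emph{existence} of a bit $\gamma(\rho)$ with $\rho$ being $\eta^{\gamma(\rho)}$-distinguished; uniqueness is not claimed. Your third paragraph, worrying about the dichotomy, is therefore addressing a statement that is not part of this proposition. The refinement that $\rho$ is \emph{not} $\eta^{\gamma(\rho)+1}$-distinguished is deferred to the next proposition, where \cite{AKT} is cited for exactly that purpose.

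Second, your ``complementary observation'' contains a genuine slip. From $c_\rho|_{F^\times}\in\{1,\eta\}$ you cannot deduce $c_\rho|_{F^\times}=\eta^{\gamma n}$ for some $\gamma\in\{0,1\}$ when $n$ is even, since then $\eta^{\gamma n}=1$ for both choices of $\gamma$; the possibility $c_\rho|_{F^\times}=\eta$ is not excluded by your argument. This is precisely the central-character hypothesis appearing in Kable's stated theorem, and the paper handles it not by verifying the hypothesis but by bypassing the statement: it cites the Asai $L$-function pole from the \emph{proof} of \cite[Theorem~7]{Kab} together with \cite[Theorem~4]{Kab} directly. Your sketch via period integrals and Asai factors is in this spirit, so once you drop the faulty central-character sentence and recognise that you are invoking the ingredients of Kable's proof rather than its final statement, the argument is the same as the paper's.
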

\begin{proof}
It follows from the assumption that $\Re(\rho)=0$. As a supercuspidal representation with a unitary central character, $\rho$ is square-integrable. The statement then follows essentially from the main theorem of \cite{Kab}. We only remark why the requirement on the central character in Kable's result is superfluous with our formulation. Indeed, the proof of \cite[Theorem 7]{Kab} shows that the local Asai L-function of one of the representations $\rho$ and $\chi\rho$ has a pole at $0$. Applying \cite[Theorem 4]{Kab} is then enough to finish the argument.
\end{proof}

\begin{prop}\label{known}
Let $\rho\in \Pi_{G_n}$ be a supercuspidal representation, and $\Delta = \Delta(\rho,a,b)$ a segment. Then, 
\begin{enumerate}
\item The identity $\Delta\cong (\Delta^\tau)^\vee$ holds, if and only if, $[(\rho^\tau)^\vee] = [\rho]$ and $\Re(\Delta)=0$.
\item Suppose that $[(\rho^\tau)^\vee] = [\rho]$ holds. Then, there is an invariant bit $\gamma(\rho)= \gamma\left([\rho]\right)\in\{0,1\}$ extending the previous definition of $\gamma$ for unitarizable supercuspidals, such that if $\Delta\cong (\Delta^\tau)^\vee$ holds, then the segment $\Delta$ is $\eta^{\gamma(\rho)}$-distinguished, and is not $\eta^{\gamma(\rho)+1}$-distinguished.
%Denote $\gamma=\gamma(\rho\nu^{-\Re(\rho)/n})$. Then, 
%\item The supercuspidal representation $\rho\nu^{-k}$ is $\eta^l$-distinguished and is not $\eta^{l-1}$-distinguished, for a given $l\in\{0,1\}$.
%\item The identity $n(a+b)=-2k$ holds.
%\item The segment $\Delta$ is $H,\eta^{\gamma-2k}$-distinguished, and is not $\eta^{\gamma-2k+1}$-distinguished.
\end{enumerate}
\end{prop}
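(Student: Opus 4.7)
For part (1), the plan is a direct identification of segments. Using the formulas given just before the statement, $(\Delta^\tau)^\vee \cong \Delta((\rho^\tau)^\vee, -b, -a)$. Two segments are isomorphic exactly when their supercuspidal supports coincide as multisets, so the condition $\Delta \cong (\Delta^\tau)^\vee$ forces $[(\rho^\tau)^\vee] = [\rho]$. Writing $(\rho^\tau)^\vee \cong \nu^k \rho$ for the unique integer $k$, the endpoints align if and only if $k = a+b$. On the other hand, $\Re((\rho^\tau)^\vee) = -\Re(\rho)$ combined with $\Re(\nu^k \rho) = kd + \Re(\rho)$ (for $\rho \in \Pi_{G_d}$) gives $\Re(\rho) = -kd/2$, and the formula $\Re(\Delta) = (b-a+1)\Re(\rho) + d(a+b)(b-a+1)/2$, which follows from additivity of $\Re$ under parabolic induction, then reduces $\Re(\Delta) = 0$ to $k = a+b$.

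For part (2), I would first extend the definition of $\gamma$ to arbitrary inertial classes $[\rho]$ with $[(\rho^\tau)^\vee] = [\rho]$. Writing $(\rho^\tau)^\vee \cong \nu^k \rho$ and noting that $\nu^\tau = \nu$ (since $|g|_E$ is Galois-invariant), one checks directly that $((\nu^{k/2}\rho)^\tau)^\vee \cong \nu^{k/2}\rho$ and $\Re(\nu^{k/2}\rho) = 0$. Thus $\rho_0 := \nu^{k/2}\rho$ is a unitarizable supercuspidal fixed by $(\cdot^\tau)^\vee$, so Proposition \ref{cusp-known} produces a bit $\gamma(\rho_0)$. A twist $\rho \mapsto \nu^j\rho$ shifts $k$ to $k - 2j$ yet leaves $\rho_0$ unchanged, so the assignment $\gamma([\rho]) := \gamma(\rho_0)$ is an invariant of the inertial class and clearly extends the supercuspidal definition.

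For the distinction assertion itself, part (1) applied to a segment $\Delta$ satisfying $\Delta \cong (\Delta^\tau)^\vee$ gives $\Re(\Delta) = 0$; hence $\Delta$ is an irreducible unitarizable essentially-square-integrable representation, i.e.~a discrete series. After rewriting $\Delta \cong \Delta(\rho_0, -m, m)$ in terms of the canonical self-dual twist $\rho_0$ produced above (where $m$ may be a half-integer), one is in the setting of \cite{Kab}, \cite{matr-early} and \cite{AKT}: Kable's Asai $L$-function dichotomy from \cite{Kab} handles a general discrete series and shows it is $\eta^\gamma$-distinguished for exactly one $\gamma\in\{0,1\}$, while the analyses of \cite{matr-early} and \cite{AKT} identify that $\gamma$ purely in terms of the supercuspidal support. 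Thus it must coincide with $\gamma([\rho])$, yielding both the existence of an $\eta^{\gamma([\rho])}$-equivariant functional and the absence of an $\eta^{\gamma([\rho])+1}$-equivariant one.

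I expect the main obstacle to be the final independence statement --- that the distinguishing bit of $\Delta$ is insensitive to the length parameter $m$ of the segment and depends only on $[\rho]$. This is the substantive content of the pole analysis in the cited works, and the role of the present proof is simply to record the identification $\gamma([\rho]) = \gamma(\rho_0)$ and fit it into the unified notation demanded by the proposition.
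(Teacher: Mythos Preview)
Your treatment of part (1) is correct and essentially identical to the paper's argument.

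Part (2) contains a genuine error in the definition of the bit. You set $\gamma([\rho]):=\gamma(\rho_0)$ where $\rho_0=\nu^{k/2}\rho$, and then assert that the distinguishing bit of $\Delta$ ``must coincide with $\gamma([\rho])$'' because the cited references determine it from the supercuspidal support. This inference is a non-sequitur: the bit is indeed an invariant of $[\rho]$, but it is not the invariant you wrote down. Matringe's formula (\cite[Corollary 4.2]{matr-early}, as quoted in the paper) gives that $\Delta(\rho,a,b)$ is $\eta^{\gamma(\rho_0)+a-b}$-distinguished. Since $a+b=k$, the parity of $a-b$ equals that of $k$, and the correct invariant is
\[
\gamma(\rho)\;=\;\gamma(\rho_0)+k\;\equiv\;\gamma(\rho_0)+\tfrac{2\Re(\rho)}{n}\pmod 2,
\]
which differs from your $\gamma([\rho])=\gamma(\rho_0)$ precisely when $k$ is odd. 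Concretely, take $\rho_0$ distinguished (so $\gamma(\rho_0)=0$), set $\rho=\nu^{1/2}\rho_0$ (so $k=-1$), and look at $\Delta=\Delta(\rho,-1,0)$: this is the length-two generalized Steinberg built on $\rho_0$, which is $\eta$-distinguished, not distinguished --- contradicting your bit.

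The fix is exactly the parity correction above, which is what the paper does. Your closing paragraph correctly identifies the ``length independence'' as the crux, but then resolves it incorrectly: the bit is length-independent only because the self-duality constraint ties the parity of the length to $k$, and that parity must be absorbed into the definition of $\gamma$.
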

\begin{proof}
1. If $\Delta\cong (\Delta^\tau)^\vee$ holds, then $[(\rho^\tau)^\vee] = [\rho]$ and $\Re(\Delta)=0$ are immediate. Conversely, suppose that $[(\rho^\tau)^\vee] = [\rho]$ holds. Note, that 
$$\Re(\Delta)= \frac{b-a+1}2\left(na+nb+2\Re(\rho)\right)$$
as a sum of an arithmetic progression. Thus, when $\Re(\Delta)=0$, we have $a+b=-2\Re(\rho)/n$. Since $\rho$ is the only element of $[\rho]$ with real exponent $\Re(\rho)$, we must have $\nu^{2\Re(\rho)/n}(\rho^\tau)^\vee\cong \rho$. From this, $\Delta((\rho^\tau)^\vee,-b,-a)\cong \Delta(\rho,a,b)$ easily follows.
\par 2. We denote $\rho_0 = \nu^{-\Re(\rho)/n}\rho$ and $\gamma_0=\gamma(\rho_0)$, as defined in Proposition \ref{cusp-known}. When $\Delta\cong (\Delta^\tau)^\vee$ holds, \cite[Corollary 4.2]{matr-early} states that $\Delta$ is $\eta^{\gamma_0+a-b}$-distinguished. Yet, we have observed above that in this the parity of $a-b$ is the same as $2\Re(\rho)/n$. Thus, 
$$\gamma(\rho):= \gamma_0 + 2\Re(\rho)/n \;(\mod 2\,)$$ 
will satisfy the condition in the statement. This definition of $\gamma$ is also easily seen to be an invariant of $[\rho]$. Finally, the fact that $\Delta$ cannot be both distinguished and $\eta$-distinguished is proved in \cite{AKT}.
\end{proof}

\begin{rem}
We clearly have $\gamma(\chi\rho) =1-\gamma(\rho)$. 
\par Also, if $[(\rho^\tau)^\vee] = [\rho]$, note that $((\nu^{1/2}\rho)^\tau)^\vee = \nu^{-1/2}(\rho^\tau)^\vee \in [\nu^{1/2}\rho]$. Thus, $\gamma(\nu^{1/2}\rho)$ is well-defined. Going through the definition of $\gamma$ in the above proof, one can deduce that $\gamma(\nu^{1/2}\rho) = 1- \gamma(\rho)$.
\end{rem}

The knowledge of the distinction properties of segments can be combined with the Blanc-Delorme method for producing functionals, and thus learning about the distinction of standard modules. Let us start with the treatment of a standard module of pure type.
\begin{lem}\label{sec-dir}
Suppose that $\Sigma = \Sigma_{[\rho]}= \left(\Sigma^\tau\right)^\ast\in \mathfrak{S}_{G_n}$ for a supercuspidal $\rho$. Then $\Sigma$ is $\eta^{\gamma(\rho)}$-distinguished. 
\par If the real exponents of all segments from which $\Sigma$ is constructed are non-zero, then $\Sigma$ is both distinguished and $\eta$-distinguished. 
\end{lem}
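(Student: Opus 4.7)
The plan is to produce the required invariant functional via the Blanc--Delorme analytic continuation (Proposition \ref{B-D}). The hypothesis $\Sigma = (\Sigma^\tau)^\ast$ asserts that the defining multiset of segments of $\Sigma$ is invariant under $\Delta \mapsto (\Delta^\tau)^\vee$; this multiset therefore decomposes into non-self-dual mirror pairs $\{\Delta, (\Delta^\tau)^\vee\}$, which by Proposition \ref{known}(1) satisfy $\Re(\Delta) \neq 0$, and self-dual segments, which have $\Re = 0$. Because $\chi^\tau = \chi^{-1}$, the twist $\Delta \mapsto \chi\Delta$ preserves this partition; so after replacing $\Sigma$ with $\chi^{\gamma(\rho)}\Sigma$, it suffices to prove distinction of $\Sigma$ under the further assumption $\gamma(\rho) = 0$, in which case every self-dual segment is itself distinguished, by Proposition \ref{known}(2).

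I would then realize $\Sigma$ as a rang\'ee module $\mathcal{S} = \Delta_1 \times \cdots \times \Delta_t$ in palindromic form: $\Delta_{t+1-i} \cong (\Delta_i^\tau)^\vee$ for all $i$, with mirror pairs placed at symmetric positions $(i, t+1-i)$ (the $\Re < 0$ member outward) and self-dual segments occupying the central block palindromically. The palindromic Levi condition $m_i = m_{t+1-i}$ is then automatic, and Lemma \ref{geom-functional} computes
\begin{equation*}
\Hom_{H_n}(\mathcal{V}_\beta(\mathcal{S}), \mathbb{C}) \;\cong\; \bigotimes_{\epsilon_\beta(i) = i} \Hom_{H_{m_i}}(\Delta_i, \mathbb{C}) \;\otimes\; \bigotimes_{i < t+1-i} \Hom_{G_{m_i}}(\Delta_i^\tau, \Delta_i^\tau),
\end{equation*}
in which every fixed-point factor is non-zero by distinction of the self-dual segment $\Delta_i$, and every 2-cycle factor contains the identity map. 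Proposition \ref{B-D} then yields the desired non-zero $H_n$-invariant functional on $\Sigma$. The ``moreover'' statement follows from the same construction applied to both $\Sigma$ and $\chi\Sigma$: when no self-dual segments appear, the fixed-point factors above are vacuous, so the argument needs no input on $\gamma(\rho)$ and produces both distinction and $\eta$-distinction.

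The main technical obstacle is securing the palindromic rang\'ee realization in all cases, particularly when multiple distinct self-dual segment types occur. One must verify precedence compatibility among the mirror-paired segments — the key observation being that within a mirror pair $(\Delta, (\Delta^\tau)^\vee)$ with $\Re(\Delta) < 0$, precedence can run only from $\Delta$ to $(\Delta^\tau)^\vee$ and not conversely — and arrange self-duals palindromically in the center block. In awkward configurations (for instance, two distinct self-dual types each with odd multiplicity), the cleanest resolution is to factor the argument through transitivity of parabolic induction: apply Blanc--Delorme to an outer mirror-pair shell and handle the inner self-dual part directly by Lemma \ref{mult-functional} (respectively Corollary \ref{eta-cor}) using distinction ($\eta$-distinction) of individual self-dual segments.
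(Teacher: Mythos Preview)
Your proposal is correct and follows the same Blanc--Delorme route as the paper, including the reduction by twisting with $\chi^{\gamma(\rho)}$ and the appeal to Lemma~\ref{geom-functional} plus Proposition~\ref{B-D}. The one difference is organizational: the paper never attempts a segment-level palindrome on the self-dual part. It immediately partitions the defining multiset as $I_-\cup I_0\cup I_+$ by the sign of $\Re$, sets $\mathcal{S}_0=\times_{i\in I_0}\Delta_i$ (distinguished by Lemma~\ref{mult-functional} once $\gamma(\rho)=0$), chooses any rang\'ee ordering $\mathcal{S}_+=\Delta_1^+\times\cdots\times\Delta_t^+$ of $I_+$, and takes $\mathcal{S}_-=\Delta_t^-\times\cdots\times\Delta_1^-$ with $\Delta_j^-:=((\Delta_j^+)^\tau)^\vee$. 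The $(2t{+}1)$-block realization $\mathcal{S}_+\times\mathcal{S}_0\times\mathcal{S}_-$ is then automatically rang\'ee (real exponents decrease across blocks, and $(\cdot)^{\tau\vee}$ reverses precedence), and Proposition~\ref{B-D} applies with $\mathcal{S}_0$ sitting at the single fixed point of $\epsilon_\beta$. This is exactly your ``fallback'' via transitivity of induction, adopted from the outset; it sidesteps the odd-multiplicity obstruction you correctly identify and makes the rang\'ee verification a one-liner.

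Two minor points. First, in your palindrome you want the $\Re>0$ member of each mirror pair in the \emph{earlier} position (not ``$\Re<0$ outward''), since the rang\'ee condition forbids an earlier segment from preceding a later one and precedence forces strictly smaller real exponent. Second, the ``moreover'' clause is handled in the paper just as you suggest: when $I_0=\emptyset$ the fixed-point factor is absent, so the same construction goes through for both $\Sigma$ and $\chi\Sigma$.
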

\begin{proof}
Let us write the multiset of segments which define $\Sigma$ as $\{\Delta_i\}_{i\in I}$. Let us partition this multiset as $I = I_{-}\cup I_0 \cup I_+$ according to whether $\Re (\Delta_i)$ is negative, zero or positive. Define $\mathcal{S}_0 = \times_{i\in I_0} \Delta_i$, with an arbitrary order of multiplication. Recall again, that if a segment $\Delta'$ precedes $\Delta^{''}$, we must have $\Re(\Delta')<\Re(\Delta^{''})$. Since $\Re(\Delta_i)=0$ for all $i\in I_0$, $\mathcal{S}_0$ is a rang\'{e}e module.
\par Clearly, the assumption on $\Sigma$ compels $[(\rho^\tau)^\vee] = [\rho]$ to hold. So, by Proposition \ref{known}, all $\{\Delta_i\}_{i\in I_0}$ are $\eta^{\gamma(\rho)}$-distinguished. By passing from $\Sigma$ to $\chi^{\gamma(\rho)} \Sigma$, we may assume they are in fact all distinguished while retaining the condition $\Sigma^\tau = \Sigma^\ast$. Thus, by Lemma \ref{mult-functional}, $\mathcal{S}_0$ is distinguished. Clearly when $\mathcal{S}_0=1$, what follows applies to both $\Sigma$ and $\chi \Sigma$. 
\par Let $\mathcal{S}_+ = \Delta^+_1\times\cdots\times \Delta^+_t$ be a rang\'{e}e module constructed from the segments $\{\Delta_i\}_{i\in I_+}$. By our assumption $\{\Delta_i\}_{i\in I}$ is closed under the operation $\Delta\mapsto (\Delta^\tau)^\vee$. Since this operation negates the real exponent of a segment, it actually induces a bijection $w:I_+\to I_-$ such that $\Delta_{w(i)}\cong (\Delta_i^\tau)^\vee$, for all $1\leq i\leq t$. If $\Delta^+_j = \Delta_i$, let us define $\Delta^-_j:= \Delta_{w(i)}$. 
Then, $\mathcal{S}_{-} = \Delta^{-}_t \times \cdots \times \Delta^-_1$ is a rang\'{e}e module.
\par Moreover, arguing as before about the real exponent of preceding segments, we see that 
$$\Delta^+_1\times\cdots\times \Delta^+_t \times \times_{i\in I_0} \Delta_i \times \Delta^{-}_t \times \cdots \times \Delta^-_1$$
is a rang\'{e}e module realization of $\Sigma$. In particular, it obviously means that $\Sigma$ can be realized as the induced representation 
$\pi:=\Delta^+_1\times\cdots\times \Delta^+_t \times \mathcal{S}_0 \times \Delta^{-}_t \times \cdots \times \Delta^-_1$.
Since $\Delta^+_i\cong ((\Delta^-_i)^\tau)^\vee$ for all $1\leq i\leq t$ and $\mathcal{S}_0$ is distinguished, by Lemma \ref{geom-functional} $\mathcal{V}_\beta(\pi)$ is distinguished, where $\beta$ is as described in Proposition \ref{B-D}. So, by that proposition $\pi$ must be distinguished as well.
\end{proof}

In general, we can now formulate the analogue of the Jacquet's conjecture on the level of standard modules.
\begin{thm}\label{jac-mod}
Let $\pi\in \Pi_{G_n}$ be such that $\pi^\vee \cong \pi^\tau$. Then, there is a decomposition $\pi\cong \pi_1\times \pi_2\times \pi_3$, where $\pi_i\in \Pi_{G_{n_i}}$ for $i=1,2,3$, $\Sigma(\pi) = \Sigma(\pi_1)\times \Sigma(\pi_2)\times \Sigma(\pi_3)$, $\Sigma(\pi_1)$ is both distinguished and $\eta$-distinguished, $\Sigma(\pi_2)$ is distinguished but not $\eta$-distinguished, and $\Sigma(\pi_3)$ is not distinguished but $\eta$-distinguished. Each of $\pi_1, \pi_2, \pi_3$ may be missing from the decomposition.

\end{thm}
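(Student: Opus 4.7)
The plan is to decompose $\pi$ by pure type and then classify the pure components according to the action of the involution $\iota([\rho]) := [(\rho^\tau)^\vee]$ on the set of supercuspidal classes appearing in $\pi$. First I would write $\pi \cong \pi_{[\rho_1]} \times \cdots \times \pi_{[\rho_l]}$ with pairwise disjoint pure types, so that $\Sigma(\pi) = \prod_k \Sigma(\pi_{[\rho_k]})$. By Proposition \ref{obv}, the hypothesis $\pi^\vee \cong \pi^\tau$ translates to $\Sigma(\pi)^\tau = \Sigma(\pi)^\ast$, i.e.\ to a multiset equality on the defining segments. Since the operation $\Delta \mapsto (\Delta^\tau)^\vee$ carries a segment of pure type $[\rho]$ to a segment of pure type $\iota([\rho])$, this multiset equality forces $\iota$ to act as an involution on $\{[\rho_1], \ldots, [\rho_l]\}$ and to match the factors $\Sigma(\pi_{[\rho_k]})$ and $\Sigma(\pi_{\iota([\rho_k])})$ segment-wise by $(\cdot^\tau)^\vee$.

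For each $2$-cycle $\{[\rho], [\rho']\}$ of $\iota$, I would show that $\Sigma(\pi_{[\rho]}) \times \Sigma(\pi_{[\rho']})$ is both distinguished and $\eta$-distinguished. Fix a rang\'{e}e realization $\Delta_1 \times \cdots \times \Delta_t$ of $\Sigma(\pi_{[\rho]})$; then the defining segments of $\Sigma(\pi_{[\rho']})$ form the multiset $\{(\Delta_s^\tau)^\vee\}_{s=1}^t$. The reversal-of-precedence property of $(\cdot^\tau)^\vee$, together with the disjointness of the two pure types, ensures that the palindromic arrangement
\[
\Delta_1 \times \cdots \times \Delta_t \times (\Delta_t^\tau)^\vee \times \cdots \times (\Delta_1^\tau)^\vee
\]
is itself a rang\'{e}e realization of the product. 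The inducing Levi subgroup has palindromic block pattern, so Proposition \ref{B-D} is available; Lemma \ref{geom-functional} applied to $\beta$ with $\epsilon_\beta(i) = 2t+1-i$ produces the required invariant functional on $\mathcal{V}_\beta$ via the non-vanishing of $\Hom_{G_{m_i}}(\Delta_i^\tau, \Delta_i^\tau)$. The $\eta$-distinction follows from the analogous argument applied to the $\chi$-twisted palindrome, since $((\chi\Delta)^\tau)^\vee = \chi \cdot (\Delta^\tau)^\vee$ preserves the pairing structure.

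For each singleton orbit $\{[\rho]\}$, Lemma \ref{sec-dir} directly yields that $\Sigma(\pi_{[\rho]})$ is $\eta^{\gamma(\rho)}$-distinguished, and both distinguished and $\eta$-distinguished whenever every defining segment has non-zero real exponent. Consequently, each orbit factor falls into one of three classes: (i) both distinguished and $\eta$-distinguished, (ii) only distinguished, or (iii) only $\eta$-distinguished. I would take $\pi_1$, $\pi_2$, $\pi_3$ to be the products of the orbit factors in classes (i), (ii), (iii) respectively, any of which may be empty. Disjointness of supercuspidal supports across orbits guarantees that no segment from one orbit factor precedes a segment from another, so $\Sigma(\pi_k)$ equals the product of the selected $\Sigma$-factors, and Lemma \ref{mult-functional} together with Corollary \ref{eta-cor} yield the positive distinction assertions on $\Sigma(\pi_1)$, $\Sigma(\pi_2)$, $\Sigma(\pi_3)$.

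The hard part will be ruling out that the assembly produces unintended distinction: that $\Sigma(\pi_2)$ is not $\eta$-distinguished and that $\Sigma(\pi_3)$ is not distinguished. For the former I plan to reapply the first-implication analysis of Section~3.2 to the $\chi$-twist $\chi\Sigma(\pi_2)$, realized as a right-ordered rang\'{e}e module (right-orderedness being preserved by $\chi$-twist). Lemma \ref{key-lem} then forces any $H_n$-invariant functional on $\chi\Sigma(\pi_2)$ to originate from an $M$-admissible geometric piece, i.e.\ from an involution $\epsilon$ on the segments of $\Sigma(\pi_2)$ satisfying $\Delta_{\epsilon(i)}^\tau \cong \Delta_i^\vee$, whose fixed segments are $\eta$-distinguished. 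Because each singleton $\iota$-orbit is stable under $[\rho] \mapsto [(\rho^\tau)^\vee]$ and distinct orbits carry disjoint supercuspidal supports, such $\epsilon$ must preserve each pure component of $\Sigma(\pi_2)$; this would produce an $\eta$-distinguished functional on each constituent $\Sigma(\pi_{[\rho_i]})$ of $\pi_2$, contradicting their assignment to class (ii). The symmetric argument covers $\Sigma(\pi_3)$.
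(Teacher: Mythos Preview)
Your construction of the positive distinction claims is correct and essentially identical to the paper's: the treatment of $2$-cycle orbits via a palindromic rang\'{e}e realization and Proposition~\ref{B-D}, and of singleton orbits via Lemma~\ref{sec-dir}, matches exactly.

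The divergence is in the negative claims. You attempt to prove directly that $\Sigma(\pi_2)$ is not $\eta$-distinguished by arguing that any involution $\epsilon$ arising from Proposition~\ref{first-dir} on $\chi\Sigma(\pi_2)$ must preserve each pure component and that this ``would produce an $\eta$-distinguished functional on each constituent $\Sigma(\pi_{[\rho_i]})$''. That final implication is not justified as written: knowing that $\mathcal{V}_{w_i}(\chi\Sigma(\pi_{[\rho_i]}))$ carries a nonzero invariant functional for \emph{some} $M_i$-admissible $w_i$ does not by itself produce a functional on $\chi\Sigma(\pi_{[\rho_i]})$ --- Proposition~\ref{B-D} only lifts from the specific piece $\mathcal{V}_\beta$, and the restricted $\epsilon_i$ need not be the anti-diagonal in a right-ordered realization. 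The gap can be closed, but it is real work: first, a fixed point of $\epsilon_i$ would be a segment that is simultaneously $\eta$-distinguished (by Proposition~\ref{first-dir} applied to the $\chi$-twist) and not $\eta$-distinguished (by Proposition~\ref{known}, since $\gamma(\rho_i)=0$ for class~(ii)), so $\epsilon_i$ is fixed-point-free; second, this forces every $\Re=0$ segment in the component to occur with even multiplicity, and then a sec-dir-style reordering with the zero block arranged in palindromic pairs is rang\'{e}e, has fixed-point-free $\beta$, and Proposition~\ref{B-D} applies to $\chi\Sigma(\pi_{[\rho_i]})$, contradicting class~(ii).

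The paper sidesteps all of this. It forms $\pi_1',\pi_2',\pi_3'$ exactly as you do, observes that only the ``not'' assertions are unproven, and then simply declares: if $\Sigma(\pi_2')$ \emph{happens} to be $\eta$-distinguished, redefine $\pi_1:=\pi_1'\times\pi_2'$ and drop $\pi_2$ from the decomposition; similarly for $\pi_3'$. Since the theorem only asserts the existence of \emph{some} decomposition, this conditional reassignment suffices. Your route, once completed, yields the sharper fact that the natural orbit-by-orbit grouping already has the required properties; the paper's trick is shorter but proves less.
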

\begin{proof}
Let $\Sigma(\pi) = \Sigma(\pi_{[\rho_1]})\times \cdots\times \Sigma(\pi_{[\rho_t]})$ be the canonical decomposition to standard modules of the pure components of $\pi$. 
By Proposition \ref{obv}, $\Sigma(\pi)^\ast = \Sigma(\pi)^\tau$. Hence, there is an involution $w$ on $\{1,\ldots,t\}$, such that $\left[\rho_{w(i)}^\tau\right] = [\rho_i^\vee]$, and $\Sigma(\pi_{[\rho_{w(i)}]})^\ast = \Sigma(\pi_{[\rho_i]})^\tau$. Since a change in the order of the pure components is of no effect, we can assume that there is $0\leq r<t$ such that $w(2i)=2i-1$ for all $1\leq i\leq r$, and $w(i)=i$ for all $2r<i\leq t$.
\par Fix $1\leq i\leq r$. Let $\Delta_1\times\ldots \times \Delta_k$ be a realization of $\Sigma(\pi_{[\rho_{2i}]})$. Then, 
$$\mathcal{S} = (\Delta_k^\tau)^\vee\times\cdots \times (\Delta_1^\tau)^\vee\times \Delta_1\times\cdots \times \Delta_k$$
must be a realization of $\Sigma(\pi_{[\rho_{2i-1}]})\times \Sigma(\pi_{[\rho_{2i}]})$. Yet, by Proposition \ref{B-D} and Lemma \ref{geom-functional}, $\mathcal{S}$ is distinguished. Furthermore, the fact that $\left(\left(\chi \Delta\right)^\tau\right)^\vee\cong \chi\left(\Delta^\tau\right)^\vee$ shows that $\chi \mathcal{S}$ is distinguished by the same argument.
\par By invoking Lemma \ref{mult-functional}, we see that $\Sigma'= \left(\Sigma(\pi_{[\rho_1]})\times\Sigma(\pi_{[\rho_2]})\right)\times\cdots\times\left( \Sigma(\pi_{[\rho_{2r-1}]})\times\Sigma(\pi_{[\rho_{2r}]})\right)$ is both distinguished and $\eta$-distinguished.
\par Finally, we have $\Sigma(\pi)= \Sigma'\times \Sigma(\pi_{[\rho_{2r+1}]})\times\cdots\times \Sigma(\pi_{[\rho_t]})$, where each $2r<i\leq t$ satisfies $\Sigma(\pi_{[\rho_{i}]})^\ast = \Sigma(\pi_{[\rho_i]})^\tau$. Thus, after a proper rearrangement of the indices, by Lemma \ref{sec-dir} there will be $2r+1\leq s_1\leq s_2\leq t$, such that $\Sigma(\pi_{[\rho_{i}]})$ is both distinguished and $\eta$-distinguished for all $2r+1\leq i\leq s_1$, distinguished for all $s_1\leq i\leq s_2$, and $\eta$-distinguished for all $s_1\leq i\leq t$. 
Put
$$\pi_1'=\times_{i=1}^{s_1}\pi_{[\rho_i]} \quad \pi_2'=\times_{i=s_1+1}^{s_2}\pi_{[\rho_i]}\quad \pi_3'=\times_{i=s_2+1}^{t}\pi_{[\rho_i]}.$$ The statement will then be partially satisfied by Lemma \ref{mult-functional} and Corollary \ref{eta-cor}, when setting $\pi_i=\pi_i'$. For the complete statement, we should switch to $\pi_1:=\pi_1'\times \pi_2'$ and omit $\pi_2$ from the decomposition, in case $\Sigma(\pi_2')$ happens to be $\eta$-distinguished as well. A similar switch can be done when $\pi_3'$ happens to be distinguished.
\end{proof}

Reading the above theorem, it may be tempting to conjecture that the irreducible representations $\pi_i$, $i=1,2,3$, should satisfy the same distinction properties as their respective standard modules. Yet, we will see that the class of ladder representations serves as a source of examples for $\pi\in \Pi_{G_n}$ of pure type, whose standard module is both distinguished and $\eta$-distinguished, while $\pi$ itself satisfies only one ``kind" of distinction.

\section{Distinction of ladder representations}

We will say that a rang\'{e}e module $\mathcal{S}= \Delta_1\times\cdots \times \Delta_t$ is of \textit{proper ladder type} if $\Delta_i$ precedes $\Delta_{i-1}$, for all $1< i\leq t$. Note, that for such $\mathcal{S}$, the standard module $\overline{\mathcal{S}}\in \mathfrak{S}_{G_n}$ has a unique rang\'{e}e module realization. Hence, we can safely say that $\overline{\mathcal{S}}$ is of proper ladder type. A representation $\pi\in \Pi_{G_n}$ is called a \textit{proper ladder} representation if $\Sigma(\pi)$ is of proper ladder type. In particular, proper ladder representations are of pure type.

In \cite{LM}, a \textit{ladder} representation was defined as $\pi\in \Pi_{G_n}$, for which $\Sigma(\pi)$ can be realized as $\Delta(\rho,a_1,b_1)\times\cdots\times \Delta(\rho,a_t,b_t)$, with $a_1>a_2>\cdots>a_t$ and $b_1>b_2>\cdots >b_t$. 

The following straightforward lemma shows that ladder representations are easily decomposed into proper ladder representations.

\begin{lem}\label{prop-decomp}
Let $\pi$ be a ladder representation. There are unique proper ladder representations $\pi_1,\ldots,\pi_k$ such that

$$ \Sigma(\pi) = \Sigma(\pi_1)\times\cdots\times\Sigma(\pi_k) $$
holds, and for which the defining segments of $\Sigma(\pi_i)$ do not precede those of $\Sigma(\pi_j)$, for distinct $i,j$. Moreover, $\pi\cong \pi_1\times\cdots\times\pi_k$.
\end{lem}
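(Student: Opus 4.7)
The plan is to partition the defining segments of $\Sigma(\pi)$ into maximal consecutive blocks along which the precedence relation holds, and to identify each block with a proper ladder factor. Writing $\Sigma(\pi)=\Delta(\rho,a_1,b_1)\times\cdots\times\Delta(\rho,a_t,b_t)$ with strictly decreasing $a_i$ and $b_i$, the definition of precedence immediately gives that $\Delta(\rho,a_i,b_i)$ precedes $\Delta(\rho,a_{i-1},b_{i-1})$ if and only if $a_{i-1}\leq b_i+1$, the remaining two inequalities being automatic for a ladder. Let $j_1<\cdots<j_{k-1}$ enumerate those indices $i\in\{2,\ldots,t\}$ at which this condition fails, so that $a_{j_s-1}>b_{j_s}+1$. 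These delimit consecutive blocks $I_1,\ldots,I_k\subseteq\{1,\ldots,t\}$, and the rang\'ee submodule $\Sigma_s$ built from the segments indexed by $I_s$ is of proper ladder type by construction. Let $\pi_s$ denote its unique irreducible quotient.

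By transitivity of parabolic induction, $\Sigma(\pi)=\Sigma(\pi_1)\times\cdots\times\Sigma(\pi_k)$. For the non-precedence claim across blocks, let $i\in I_l$ and $i'\in I_{l'}$ with $l<l'$. Then $\Delta_i$ cannot precede $\Delta_{i'}$ because $a_i>a_{i'}$, while $\Delta_{i'}$ cannot precede $\Delta_i$ because
$$ a_i \geq a_{j_l-1} > b_{j_l}+1 \geq b_{i'}+1, $$
so the required inequality $a_i\leq b_{i'}+1$ fails. Uniqueness of the decomposition is then automatic: any family with the stated properties must group the segments of $\Sigma(\pi)$ exactly according to the positions where precedence breaks along the chain.

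Finally, for the factorization $\pi\cong\pi_1\times\cdots\times\pi_k$, tensor the quotient maps $\Sigma(\pi_s)\twoheadrightarrow\pi_s$ and apply the exact functor of parabolic induction to produce a surjection $\Sigma(\pi)\twoheadrightarrow\pi_1\times\cdots\times\pi_k$. Since $\pi$ is by definition the unique irreducible quotient of $\Sigma(\pi)$, it suffices to show that the target of this surjection is irreducible; this is the one substantive step. I would deduce it by invoking an irreducibility criterion for products of ladder representations from Lapid-M\'inguez \cite{LM}: the cross-block non-precedence established above provides precisely the combinatorial input that such a criterion requires. Once irreducibility is in hand, the surjection forces $\pi\cong\pi_1\times\cdots\times\pi_k$, finishing the argument.
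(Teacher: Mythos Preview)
Your proof is correct and follows essentially the same route as the paper: partition the ladder's segments at the indices where precedence along the chain fails, and identify each block with a proper ladder factor. The only difference worth noting is in the final step. The paper obtains $\pi\cong\pi_1\times\cdots\times\pi_k$ by directly invoking \cite[Proposition~8.5]{zel}, which states that when the segments of distinct $\mathfrak{a}_i$ are pairwise unlinked, the Langlands quotient of the full multisegment equals the product of the Langlands quotients of the parts; this yields the isomorphism immediately, without passing through a separate irreducibility argument. Your route---produce the surjection $\Sigma(\pi)\twoheadrightarrow\pi_1\times\cdots\times\pi_k$ and then argue irreducibility of the target---is logically equivalent, but the irreducibility you need is already the classical Zelevinsky result rather than something specific to \cite{LM}. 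Citing \cite{zel} here is both more precise and more economical.
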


\begin{proof}
Suppose that $\pi$ is realized as $\Delta(\rho,a_1,b_1)\times\cdots\times \Delta(\rho,a_t,b_t)$. We write
$$i_1 = \min \{1\leq i \leq t\;:\; b_{i+1}< a_i-1\}\quad (b_{t+1}=-\infty),$$ 
and denote $\Sigma_1 = \Delta(\rho,a_1,b_1)\times\cdots\times \Delta(\rho,a_{i_1},b_{i_1})$. Next, we write
$$i_2 = \min \{i_1< i \leq t\;:\; b_{i+1}< a_i-1\},$$ 
and $\Sigma_2 = \Delta(\rho,a_{i_1+1},b_{i_1+1})\times\cdots\times \Delta(\rho,a_{i_2},b_{i_2})$. We continue like so until $i_k=t$. 

The irreducible representations $\pi_1,\ldots,\pi_k$ for which $\Sigma(\pi_i)=\Sigma_i$ clearly satisfy the first conditions of the statement. The equality $\pi\cong \pi_1\times\cdots\times\pi_k$ then follows from \cite[Proposition 8.5]{zel}.
\end{proof}

Suppose that $\Delta = \Delta(\rho,a,b)$ is a segment that precedes $\Delta'=\Delta(\rho,a',b')$. Then, $(\Delta\cup \Delta')\times (\Delta \cap \Delta')$ is a sub-representation of $\Delta'\times \Delta$, where $\Delta'\cup \Delta = \Delta(\rho, a,b')$ and $\Delta'\cap \Delta = \Delta(\rho, a',b)$. Moreover, if $\Sigma(\pi) = \Delta'\times \Delta$, then $\pi$ is given as $\Sigma(\pi)/\left((\Delta\cup \Delta')\times (\Delta \cap \Delta')\right)$. Such description of the maximal sub-representation of a standard module was generalized in \cite{LM} for standard modules of ladder representations.
\par Suppose that $\pi\in \Pi_{G_n}$ is a proper ladder representation, with $\mathcal{S}=\Delta_1\times\cdots\times \Delta_t$ realizing $\Sigma(\pi)$. For all $i=1,\ldots,t-1$, we define
$$\mathcal{S}_i = \Delta_1\times\cdots\times\Delta_{i-1} \times (\Delta_{i+1}\cup \Delta_i)\times (\Delta_{i+1} \cap \Delta_i)\times \Delta_{i+2}\times\cdots\times \Delta_t.$$
It is easy to check that the $\mathcal{S}_i$'s are all standard modules, which by exactness of parabolic induction can be embedded as sub-representations of $\mathcal{S}$. Let us denote $\Sigma_i =\overline{\mathcal{S}_i}$, and consider $\Sigma_1,\ldots, \Sigma_{t-1}\subset \Sigma(\pi)$ as sub-representations. The main theorem of \cite{LM} states that $\pi\cong \Sigma(\pi)/\left(\Sigma_1+\cdots+\Sigma_{t-1}\right)$. We would like to use this description to obtain invariant functionals on ladder representations.

\begin{thm}\label{ladder-thm}
Let $\pi = \pi_{[\rho]}\in \Pi_{G_n}$ be a proper ladder representation, with $\Sigma(\pi)= \Delta_1\times\cdots\times \Delta_t$. Then $\pi^\tau\cong \pi^\vee$ holds, if and only if, $\pi$ is $\eta^{\gamma(\rho)+t+1}$-distinguished.
\end{thm}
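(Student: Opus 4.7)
The ``if'' direction is immediate: if $\pi$ is $\eta^a$-distinguished, then applying the main result of \cite{Fli91} to $\chi^a \pi$, combined with $\chi^\tau=\chi^{-1}$, gives $\pi^\tau\cong\pi^\vee$. The substance lies in the ``only if'' direction, which I plan to attack via the short exact sequence
\[
0\to \Sigma_1+\cdots+\Sigma_{t-1}\to \Sigma(\pi)\to \pi\to 0
\]
coming from the main theorem of \cite{LM}. My strategy is to produce a nonzero $(H_n,\eta^{\gamma(\rho)+t+1})$-equivariant functional on the standard module $\Sigma(\pi)$ and then argue that it descends to $\pi$, by showing it vanishes on each sub-representation $\Sigma_i$.

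Assuming $\pi^\tau\cong \pi^\vee$, Proposition \ref{obv} gives $\Sigma(\pi)^\ast=\Sigma(\pi)^\tau$. Since the segment endings $b_1>\cdots>b_t$ are pairwise distinct, the matching involution on indices is forced to be $j\mapsto t+1-j$, yielding $\Delta_{t+1-j}\cong (\Delta_j^\tau)^\vee$ for every $j$. When $t$ is odd, the middle segment is then self-dual with zero real exponent, and Lemma \ref{sec-dir} produces an $(H_n,\eta^{\gamma(\rho)})$-equivariant functional $\ell$ on $\Sigma(\pi)$, which is exactly $\eta^{\gamma(\rho)+t+1}$ in this case. When $t$ is even all segments have nonzero real exponent (otherwise $\Re(\Delta_j)=0$ would force $\Delta_j\cong\Delta_{t+1-j}$, violating distinctness), so the second part of Lemma \ref{sec-dir} supplies both a distinguished and an $\eta$-distinguished functional; I take the $(H_n,\eta^{\gamma(\rho)+1})=(H_n,\eta^{\gamma(\rho)+t+1})$-one.

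For the descent, the first combinatorial claim I would establish is that $\Sigma_i^\ast=\Sigma_i^\tau$ holds if and only if $t=2i$. The key observation is that $((U_i)^\tau)^\vee=U_{t-i}$ where $U_i=\Delta(\rho,a_{i+1},b_i)$, and since the segments of $\Sigma_i$ have pairwise distinct endings, matching the dual multiset against the original forces $t-i=i$. Hence for every $i\neq t/2$ (in particular, for every $1\le i\le t-1$ when $t$ is odd), the failure of this identity combined with Proposition \ref{first-dir} applied to each twist $\chi^a\Sigma_i$ shows that $\Sigma_i$ carries no $(H_n,\eta^a)$-equivariant functional at all, so $\ell|_{\Sigma_i}=0$ automatically.

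The main obstacle, and the only remaining case, is $t$ even with $i=t/2$, where $\Sigma_{t/2}^\ast=\Sigma_{t/2}^\tau$ genuinely holds and $\Sigma_{t/2}$ is in fact $\eta^{\gamma(\rho)}$-distinguished by Lemma \ref{sec-dir}. I expect to resolve this through a parity obstruction on the middle segments themselves: both $U_{t/2}$ and $V_{t/2}=\Delta(\rho,a_{t/2},b_{t/2+1})$ are self-dual with $\Re=0$, so by Proposition \ref{known}(2) each is $\eta^{\gamma(\rho)}$-distinguished but \emph{not} $\eta^{\gamma(\rho)+1}$-distinguished. Since all segments of $\Sigma_{t/2}$ are pairwise distinct, uniqueness of the Proposition \ref{first-dir} involution forces it to fix both $U_{t/2}$ and $V_{t/2}$; hence $\eta^{\gamma(\rho)+1}$-distinction of $\Sigma_{t/2}$ would demand $\eta^{\gamma(\rho)+1}$-distinction of these two middle segments, which is impossible. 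This rules out any nonzero $(H_n,\eta^{\gamma(\rho)+1})$-functional on $\Sigma_{t/2}$, and $\ell$ descends to the required functional on $\pi$.
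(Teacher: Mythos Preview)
Your proposal is correct and follows essentially the same route as the paper: produce an $(H_n,\eta^{\gamma(\rho)+t+1})$-functional on $\Sigma(\pi)$ via Lemma~\ref{sec-dir}, then use the Lapid--M\'{i}nguez resolution to descend it to $\pi$ by showing each $\Sigma_i$ is not $\eta^{\gamma(\rho)+t+1}$-distinguished. The paper argues the non-distinction of $\Sigma_i$ by directly analyzing the involution $\epsilon$ coming from Proposition~\ref{first-dir} and eliminating cases (first forcing $i_0=t/2$, then $\epsilon(i_0)=i_0$ by a length comparison), whereas you package the first reduction as the clean combinatorial statement $\Sigma_i^\ast=\Sigma_i^\tau \Leftrightarrow t=2i$ via $((U_i)^\tau)^\vee=U_{t-i}$, and then in the critical case invoke distinctness of segments to pin down the involution uniquely; both arguments terminate in the same parity obstruction from Proposition~\ref{known} on the self-dual segment $U_{t/2}=\Delta_{t/2+1}\cup\Delta_{t/2}$.
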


\begin{proof}
As mentioned before, when $\pi$ is distinguished, $\pi^\tau\cong \pi^\vee$ follows from \cite[Proposition 12]{Fli91}. In the same manner, when $\chi\pi$ is distinguished, $\chi^\tau \pi^\tau \cong \chi^{-1}\pi^\vee$ holds, but $\chi^\tau =\chi^{-1}$.

Conversely, suppose that $\pi^\tau\cong \pi^\vee$ holds. 

By Proposition \ref{obv} we have $\Sigma(\pi)^\ast=\Sigma(\pi)^\tau$. Hence, $(\Delta_t^\tau)^\vee\times\cdots \times (\Delta_1^\tau)^\vee\cong \Delta_1\times\cdots\Delta_t$. Since there is only one rang\'{e}e module realization of a standard module of proper ladder type, we must have $\Delta_{t+1-i} \cong \left(\Delta_i^\tau\right)^\vee$. In particular, $\Re(\Delta_i)=-\Re(\Delta_{t+1-i})$. The ladder condition also imposes $\Re(\Delta_1)>\Re(\Delta_2)>\ldots > \Re(\Delta_t)$. Thus, if $t$ is even, $\Re(\Delta_i)\neq 0$ for all $i$. Lemma \ref{sec-dir} then indicates that $\Sigma(\pi)$ is $\eta^{\gamma(\rho)+t+1}$-distinguished. The same lemma also gives the same conclusion when $t$ is odd ($\eta^{\gamma(\rho)+t+1}=\eta^{\gamma(\rho)}$).
\par We exhibited a non-zero $(H_n,\eta^{\gamma(\rho)+t+1})$-equivariant functional on $\Sigma(\pi)$. Now, we would like to show that it factors through the map $\Sigma(\pi)\mapsto \pi$. In other words, we like to show the functional vanishes on each $\Sigma_i,\:i=1,\ldots, t-1$. It is enough to show that these standard modules are not $\eta^{\gamma(\rho)+t+1}$-distinguished.
\par Assume the contrary, that is, $\Sigma'=\chi^{\gamma(\rho)+t+1}\Sigma_{i_0}$ is distinguished, for some $i_0$. Let $\Delta'_1\times\cdots\times \Delta'_t$ be a realization of $\Sigma'$, where $\Delta'_j= \chi^{\gamma(\rho)+t+1}\Delta_j$ for $1\leq j<i_0$ or $i_0+1<j\leq t$, and $\Delta'_{i_0}= \chi^{\gamma(\rho)+t+1}\left(\Delta_{i_0+1}\cup \Delta_{i_0}\right)$, $\Delta'_{i_0+1}= \chi^{\gamma(\rho)+t+1}\left(\Delta_{i_0+1}\cap \Delta_{i_0}\right)$. By Proposition \ref{first-dir} there is an involution $\epsilon$ on $\{1,\ldots,t\}$, such that $\left(\Delta'^\tau_{j}\right)^\vee\cong \Delta'_{\epsilon(j)}$.

\par Suppose that $t+1-i_0\not\in \{i_0,i_0+1\}$. Then, 
$$\Delta'_{\epsilon( t+1-i_0)}\cong \chi^{\gamma(\rho)+t+1} \left(\Delta_{t+1-i_0}^\tau\right)^\vee\cong \chi^{\gamma(\rho)+t+1}\Delta_{i_0}.$$
Yet, since $\Sigma(\pi)$ is of ladder type, it can be easily seen that none of $\Delta'_1,\ldots,\Delta'_t$ can be isomorphic to $\chi^{\gamma(\rho)+t+1}\Delta_{i_0}$. Thus, we must have $t+1-i_0\in \{i_0,i_0+1\}$. The same argument also shows that $t+1-(i_0+1)\in\{i_0,i_0+1\}$. In other words, $t$ must be even, and $i_0=t/2$.
\par Now, by repeating a similar argument we can see that if $\epsilon(i_0)\not\in \{i_0,i_0+1\}$, then $\Delta'_{i_0}\cong \chi^{\gamma(\rho)+t+1} \Delta_{t+1-\epsilon(i_0)}$. That would have meant $\Delta_{i_0+1}\cup \Delta_{i_0} \cong \Delta_{t+1-\epsilon(i_0)}$, which is again a contradiction, because $\Sigma(\pi)$ is of ladder type. So, $\epsilon(i_0)\in \{i_0,i_0+1\}$. Note, that $\left(\Delta'^\tau_{i_0}\right)^\vee\cong \Delta'_{i_0+1}$ cannot hold, because those are segments of different lengths. Hence, $\epsilon(i_0)=i_0$, and by Proposition \ref{first-dir} it means $\Delta_{i_0+1}\cup \Delta_{i_0}$ is $\eta^{\gamma(\rho)+t+1}$-distinguished. Recalling that $t$ is even, this gives a contradiction to Proposition \ref{known}.

\end{proof}

\begin{thm}\label{ladder-thm-2}
Let $\pi=\pi_{[\rho]}\in \Pi_{G_n}$ be a ladder representation satisfying 
$$\pi^\tau\cong \pi^\vee\;.$$

Suppose that $\pi \cong \pi_1\times\cdots\times \pi_k$, with each $\pi_i$ being a proper ladder representation, as in Lemma \ref{prop-decomp}. Suppose further that $\Sigma(\pi) =  \Delta_1\times\cdots\times \Delta_t$.

Then, in case $k$ is even, $\pi$ is both distinguished and $\eta$-distinguished.

In case $k$ is odd, $\pi$ is $\eta^{\gamma(\rho)+t+1}$-distinguished.

\end{thm}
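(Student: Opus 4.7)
The plan is to build on the proper ladder decomposition $\pi\cong\pi_1\times\cdots\times\pi_k$ from Lemma \ref{prop-decomp}, with $\pi_j\in\Pi_{G_{n_j}}$, and to reduce both parts of the theorem to a single application of the Blanc--Delorme machinery (Proposition \ref{B-D}) on the Levi $M=M_{(n_1,\ldots,n_k)}<G_n$, invoking Theorem \ref{ladder-thm} for the middle piece when $k$ is odd.

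First I would establish the duality $\pi_j^\tau\cong\pi_{k+1-j}^\vee$ for every $j$. By Proposition \ref{obv}, the hypothesis $\pi^\tau\cong\pi^\vee$ is equivalent to $\Sigma(\pi)^\ast=\Sigma(\pi)^\tau$, which, exactly as in the proof of Theorem \ref{ladder-thm}, forces $(\Delta_i^\tau)^\vee\cong\Delta_{t+1-i}$ in the unique rang\'{e}e realization $\Sigma(\pi)=\Delta_1\times\cdots\times\Delta_t$. Since $(\cdot^\tau)^\vee$ reverses the order of the segments and preserves the precedence relation, the gap positions $i$ with $b_{i+1}<a_i-1$ separating consecutive proper ladders are symmetric under $i\mapsto t-i$. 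Consequently the duality carries the multiset of segments of $\Sigma(\pi_j)$ onto that of $\Sigma(\pi_{k+1-j})$; applying Proposition \ref{obv} to each $\pi_j$ then yields $\pi_j^\tau\cong\pi_{k+1-j}^\vee$, and in particular $n_j=n_{k+1-j}$, so $M$ is a symmetric Levi.

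Next, for a parameter $a\in\{0,1\}$ to be specified, I would apply Proposition \ref{B-D} to $\chi^a\pi\cong\chi^a\pi_1\times\cdots\times\chi^a\pi_k$ with the $M$-admissible element $\beta$ given by $\epsilon_\beta(j)=k+1-j$. Lemma \ref{geom-functional} decomposes $\Hom_{H_n}(\mathcal{V}_\beta(\chi^a\pi),\mathbb{C})$ as a tensor product of off-diagonal factors $\Hom_{G_{n_j}}((\chi^a\pi_j)^\tau,(\chi^a\pi_{k+1-j})^\vee)$ for $j<k+1-j$, together with (when $k$ is odd) a diagonal factor $\Hom_{H_{n_m}}(\chi^a\pi_m,\mathbb{C})$ for $m=(k+1)/2$. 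Since $\chi^\tau=\chi^{-1}$, the duality from the previous step passes to the twists $\chi^a\pi_j$, so by Schur's lemma each off-diagonal factor is one-dimensional regardless of $a$.

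The two cases are then concluded as follows. If $k$ is even, no diagonal factor appears; taking $a=0$ and $a=1$ in turn makes $\mathcal{V}_\beta(\chi^a\pi)$ distinguished in both cases, and Proposition \ref{B-D} delivers both the distinction and the $\eta$-distinction of $\pi$. If $k$ is odd, the outer $k-1$ proper ladders pair up bijectively with equal segment counts, so $t-t_0$ is even, where $t_0$ is the segment count of the middle proper ladder $\pi_m$; hence $t\equiv t_0\pmod{2}$. Since $\pi_m^\tau\cong\pi_m^\vee$ by the first step, Theorem \ref{ladder-thm} implies that $\pi_m$ is $\eta^{\gamma(\rho)+t_0+1}=\eta^{\gamma(\rho)+t+1}$-distinguished; choosing $a\equiv\gamma(\rho)+t+1\pmod{2}$ makes the diagonal factor non-zero as well, and Proposition \ref{B-D} yields the desired $\eta^{\gamma(\rho)+t+1}$-distinction of $\pi$. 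The main obstacle I expect is the bookkeeping of the first step---verifying that the proper ladder decomposition is preserved setwise by the $(\cdot^\tau)^\vee$ duality---since once that pairing is secured, the rest is a direct assembly of Proposition \ref{B-D}, Theorem \ref{ladder-thm}, and a parity check.
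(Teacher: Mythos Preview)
Your proposal is correct and follows essentially the same route as the paper: establish the duality $\pi_j^\tau\cong\pi_{k+1-j}^\vee$ from $\Sigma(\pi)^\ast=\Sigma(\pi)^\tau$, then apply Proposition \ref{B-D} via Lemma \ref{geom-functional} on the Levi $M_{(n_1,\ldots,n_k)}$, invoking Theorem \ref{ladder-thm} for the middle block when $k$ is odd together with the parity observation $t\equiv t_0\pmod 2$. The only cosmetic difference is that the paper deduces the pairing $\pi_j\leftrightarrow\pi_{k+1-j}$ from uniqueness of the proper-ladder decomposition ordered by real exponent, whereas you verify the equivalent segment-level symmetry of gap positions directly; both arguments are valid and yield the same conclusion.
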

\begin{proof}
Recall that $\Sigma(\pi) = \Sigma(\pi_1)\times\cdots\times \Sigma(\pi_k)$ and by Proposition \ref{obv},
$$ \Sigma(\pi_k)^\ast\times\cdots\times \Sigma(\pi_1)^\ast = \Sigma(\pi)^\ast = \Sigma(\pi)^\tau = \Sigma(\pi_1)^\tau\times\cdots\times \Sigma(\pi_k)^\tau.$$
The above equality shows two decompositions of a standard module of a ladder representation into standard modules of proper ladder type, each of which ordered by decreasing real exponent of their irreducible quotient. Thus, from uniqueness we have $\pi^\vee_i\cong \pi^\tau_{k+1-i}$, for all $1\leq i\leq k$. 

In case $k$ is even, we can use Proposition \ref{B-D} to produce the desired functionals on $\pi$, and on $\chi\pi$, by applying the same argument as in the end of the proof of Lemma \ref{sec-dir}.

In case $k$ is odd, this argument would still be valid if we knew that $\pi_{(k+1)/2}\cong (\pi_{(k+1)/2}^\tau)^\vee$ is $\eta^{\gamma(\rho)+t+1}$-distinguished. Indeed, by Theorem \ref{ladder-thm}, $\pi_{(k+1)/2}$ is $\eta^{\gamma(\rho)+t_0+1}$-distinguished, where $t_0$ is the number of segments in the construction of $\Sigma(\pi_{(k+1)/2})$. Note, that for any $i\neq (k+1)/2$, $\Sigma(\pi_i)$ and $\Sigma(\pi_{k+1-i})$ are distinct standard modules constructed from an equal number of segments. Thus, $t-t_0$ is an even number, and $\eta^{\gamma(\rho)+t+1}=\eta^{\gamma(\rho)+t_0+1}$.

\end{proof}

%With little effort we can extend this result to a wider class of irreducible representations.

Finally, we want to complete the above result by showing that a ladder representation that is induced from an odd number of proper ladder representations, cannot be both distinguished and $\eta$-distinguished.

\begin{lem}\label{noboth-lem}
Suppose that $\mathcal{S}=\mathcal{S}_{[\rho]}$ is a rang\'{e}e module of pure type induced from $t$ segments. If $t$ is odd, then $\mathcal{S}$ is not $\eta^{\gamma(\rho)+1}$-distinguished.
\end{lem}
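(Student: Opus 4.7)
The plan is to assume for contradiction that $\mathcal{S}=\Delta_1\times\cdots\times\Delta_t$ is $\eta^{\gamma(\rho)+1}$-distinguished, which is equivalent (by the basic identity $\chi^\tau=\chi^{-1}$ and compatibility of twisting with $\tau$ and $\vee$) to $\chi^{\gamma(\rho)+1}\mathcal{S}$ being distinguished. Since $\mathcal{S}_{[\rho]}$ is of pure type, the twisted rang\'ee module $\chi^{\gamma(\rho)+1}\mathcal{S}$ is also a rang\'ee module built from the segments $\chi^{\gamma(\rho)+1}\Delta_1,\ldots,\chi^{\gamma(\rho)+1}\Delta_t$, all of whose defining supercuspidals lie in the single class $[\chi^{\gamma(\rho)+1}\rho]$.

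By Proposition \ref{first-dir} applied to $\chi^{\gamma(\rho)+1}\mathcal{S}$, there is an involution $\epsilon$ on $\{1,\ldots,t\}$ such that
\[
\left((\chi^{\gamma(\rho)+1}\Delta_{\epsilon(i)})^\tau\right)^\vee\cong \chi^{\gamma(\rho)+1}\Delta_i
\]
for every $i$, and moreover for each fixed point $i=\epsilon(i)$ the segment $\chi^{\gamma(\rho)+1}\Delta_i$ is distinguished, i.e.~$\Delta_i$ is $\eta^{\gamma(\rho)+1}$-distinguished. A fixed point $i$ forces $\Delta_i\cong(\Delta_i^\tau)^\vee$, so by Proposition \ref{known}(1) the supercuspidal class of $\Delta_i$ equals $[\rho]$ and $\Re(\Delta_i)=0$; then Proposition \ref{known}(2), together with the fact that $\gamma$ is an invariant of $[\rho]$, says that $\Delta_i$ is $\eta^{\gamma(\rho)}$-distinguished but \emph{not} $\eta^{\gamma(\rho)+1}$-distinguished. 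This is the required contradiction at any fixed point.

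Hence $\epsilon$ must act on $\{1,\ldots,t\}$ with no fixed points. But an involution on a finite set of odd cardinality necessarily has at least one fixed point. This is incompatible with $t$ odd, completing the proof.

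The only place where anything subtle happens is the bookkeeping with $\chi$-twists and with the invariance of $\gamma$ under the class $[\rho]$; once that is set up, the proof is the clean parity argument: distinction by the ``wrong'' character would force the pairing involution $\epsilon$ to be fixed-point-free, which is impossible in odd length.
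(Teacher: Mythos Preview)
Your proof is correct and follows essentially the same approach as the paper: twist by $\chi^{\gamma(\rho)+1}$, apply Proposition~\ref{first-dir} to obtain an involution $\epsilon$ whose fixed points yield distinguished segments, and then use the oddness of $t$ together with Proposition~\ref{known} to derive a contradiction at a fixed point. The paper's version is slightly terser (it does not separately verify $\Delta_i\cong(\Delta_i^\tau)^\vee$ at a fixed point, since this is immediate from distinction), but the logic is identical.
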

\begin{proof}
Assume that $\mathcal{S}':= \chi^{\gamma(\rho)+1} \mathcal{S} = \Delta_1\times\cdots\times\Delta_t$ is distinguished. Let $\epsilon$ be the involution on $\{1,\ldots,t\}$ supplied by Proposition \ref{first-dir}. 
Since $t$ is odd, there is a fixed point $1\leq r\leq t$ of $\epsilon$.
%Arguing like at the start of the proof of Theorem .., we claim that $\left(\Delta_i^\tau\right)^\vee\cong \Delta_{t+1-i}$ for all $1\leq i\leq t$. Since the segments of a standard module of ladder type are all distinct, we must have $\epsilon(i)=t+1-i$. In particular, $\epsilon(r)=r$, for $r=\frac{t+1}2$. 
Hence, by Proposition \ref{first-dir}, $\Delta_r$ is distinguished. However, $\Delta_r\cong \chi^{\gamma(\rho)+1} \Delta(\rho,a,b)$ for some $a,b$. This contradicts Proposition \ref{known}.
\end{proof}

For the case of a proper ladder representation for which $\Sigma(\pi)$ is constructed from an even number of segments, the situation is more complicated. The standard module $\Sigma(\pi)$ will, in fact, be both distinguished and $\eta$-distinguished. Yet, we must show that not all of these functionals can factor through the quotient $\pi$. 
\par For that cause, we will apply the theory of Gelfand-Kazhdan derivatives for representations of $G_n$. Let us recall the mirabolic subgroup $P_n< G_n$ consisting of matrices whose bottom row is $(0\cdots 0\;1)$. For an admissible representation $\sigma$ of $G_{k-1}$, denote by $\Psi^+(\sigma)$ the representation of $P_k$ obtained by composing the natural homomorphism $P_k\to G_{k-1}$ on $\nu^{1/2}\sigma$. Also, there is a canonical functor $\Phi^+$ taking representations of $P_k$ to representations of $P_{k+1}$, whose definition we will refrain from writing here. Recall, that given $\pi\in \Pi_{G_n}$, there are well-defined finite-length (possibly zero) representations $\pi^{(k)}$ of $G_{n-k}$ for $k=1,\ldots,n$\footnote{We treat $G_0 = P_1$ as the trivial group. We will formally refer to the one-dimensional irreducible representation of the trivial group $1$, $\Psi^+(1)$ and $\Sigma(1)$ as \textit{empty} representations in our notation. The product operation on them will have a trivial meaning ($1\times \pi = \pi$).}, called the derivatives of $\pi$. There is a filtration of $\pi$ as a $P_n$-representation, such that its subquotients are isomorphic to $(\Phi^+)^{k-1}\circ\Psi^+(\pi^{(k)})$, $k=1,\ldots,n$.
\par The theory of derivatives is useful for our analysis of distinction because of the following known equality (\cite[Lemma 2.4]{AKT}) for a finite-length admissible representation $\sigma$ of $G_{n-k}$ ($1\leq k\leq n-1$):
$$\dim Hom_{P_n\cap H_n}((\Phi^+)^{k-1}\circ\Psi^+(\sigma),\mathbb{C}) = \dim Hom_{H_{n-k}}(\nu^{1/2}\sigma,\mathbb{C}).$$
\begin{lem}\label{deriv-lem}
Let $\pi\in \Pi_{G_n}$ be a non-generic distinguished representation. Then $\nu^{1/2}\sigma$ must be distinguished for at least one irreducible subquotient $\sigma$ of one of the derivatives of $\pi$.
\end{lem}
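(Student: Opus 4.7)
The plan is to exploit the Bernstein-Zelevinsky $P_n$-filtration of $\pi$ in combination with the cited Hom identity from \cite[Lemma 2.4]{AKT}. Start with a non-zero $H_n$-invariant functional $\ell$ on $\pi$, whose existence is guaranteed by distinction. Restricting $\ell$ to the subgroup $P_n\cap H_n$ gives a non-zero $(P_n\cap H_n)$-invariant functional on the same underlying space.

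By the filtration recalled just above the lemma, $\pi|_{P_n}$ admits a finite filtration whose successive subquotients are isomorphic to $(\Phi^+)^{k-1}\circ\Psi^+(\pi^{(k)})$ for $k=1,\ldots,n$. Taking the smallest index in the filtration for which the restricted functional fails to vanish and passing to the corresponding subquotient, we obtain some $1\le k\le n$ with
$$\Hom_{P_n\cap H_n}\!\left((\Phi^+)^{k-1}\circ\Psi^+(\pi^{(k)}),\mathbb{C}\right)\neq 0.$$
The non-generic hypothesis enters exactly here: for an irreducible $\pi\in \Pi_{G_n}$ the vanishing $\pi^{(n)}=0$ is equivalent to the non-existence of a Whittaker model, i.e.~to $\pi$ being non-generic. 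Hence the case $k=n$ is ruled out, and the displayed Hom identity, whose range of validity is precisely $1\le k\le n-1$, may be invoked to conclude that $\Hom_{H_{n-k}}(\nu^{1/2}\pi^{(k)},\mathbb{C})\neq 0$.

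Finally, we pass from the finite-length representation $\nu^{1/2}\pi^{(k)}$ to an irreducible constituent: pick a composition series $0=V_0\subset V_1\subset\cdots\subset V_m=\nu^{1/2}\pi^{(k)}$, and let $j$ be the smallest index for which the $H_{n-k}$-invariant functional does not vanish on $V_j$. The functional then factors through a non-zero $H_{n-k}$-invariant functional on the irreducible quotient $V_j/V_{j-1}$. Since twisting by the character $\nu^{1/2}$ is an exact functor, this irreducible subquotient has the form $\nu^{1/2}\sigma$ for an irreducible subquotient $\sigma$ of $\pi^{(k)}$, which supplies the desired $\sigma$. The only step of substance is the exclusion of the $k=n$ piece of the filtration, which is precisely what the non-genericity hypothesis provides and what compensates for the inapplicability of the cited Hom equality at $k=n$.
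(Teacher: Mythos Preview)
Your proof is correct and follows essentially the same approach as the paper's: use non-genericity to kill the $k=n$ piece of the Bernstein--Zelevinsky filtration, then pass the $P_n\cap H_n$-invariant functional to some subquotient $(\Phi^+)^{k-1}\circ\Psi^+(\pi^{(k)})$ with $k\le n-1$, and apply the cited Hom identity. You have simply spelled out in more detail the final passage from the finite-length derivative $\pi^{(k)}$ to an irreducible subquotient $\sigma$, which the paper leaves implicit.
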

\begin{proof}
When $\pi$ is non-generic, $\pi^{(n)}$ is zero (one may take it as a definition). The non-zero $H_n$-invariant functional on $\pi$ induces a non-zero $P_n\cap H_n$-invariant functional on at least one of the $P_n$-subquotients $(\Phi^+)^{k-1}\circ\Psi^+(\pi^{(k)})$ of $\pi$, for $1\leq k\leq n-1$. The rest follows from the above equality.

\end{proof}

The so-called full derivative of a ladder representation was computed in \cite[Theorem 14]{LM}. In other words, the semisimplification of all of the derivatives of a ladder $\pi\in \Pi_{G_n}$ is known, and in fact consists of ladder representations of smaller groups. Let us repeat it here. When $\Sigma(\pi)$ is of ladder type and is given by $\Delta(\rho,a_1,b_1)\times\cdots\times \Delta(\rho,a_t,b_t)$, the set of all irreducible representations appearing as subquotients of the derivatives of $\pi$ is
$$\mathcal{D}(\pi)=\left\{\sigma\in \bigcup_{k<n}\Pi_{G_{k}}\::\: \Sigma(\sigma)\cong\Delta(\rho,a'_1,b_1)\times\cdots\times \Delta(\rho,a'_t,b_t),\,\forall i\; a_i\leq a'_i<a_{i-1} \right\},$$
where $a_0$ is taken as $\infty$ and $\Delta(\rho, a,b)=1$ if $a>b$ .

\begin{thm}\label{noboth-thm}
Let $\pi \in \Pi_{G_n}$ be a ladder representation with a decomposition $\pi\cong \pi_1\times\cdots \times\pi_k$ into proper ladder representations, as in Lemma \ref{prop-decomp}.

If $k$ is odd, $\pi$ cannot be both distinguished and $\eta$-distinguished.
\end{thm}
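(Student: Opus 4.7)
I would assume for contradiction that $\pi$ is simultaneously distinguished and $\eta$-distinguished. Each of these forces $\pi^\tau\cong\pi^\vee$ (via Flicker's result, applied to both $\pi$ and $\chi\pi$, using $\chi^\tau=\chi^{-1}$), so Theorem \ref{ladder-thm-2} already gives that $\pi$ is $\eta^{\gamma(\rho)+t+1}$-distinguished, where $t=\sum_i t_i$ is the total number of segments in $\Sigma(\pi)$. Thus it suffices to rule out $\eta^{\gamma(\rho)+t}$-distinction. From the pairing $\pi_i^\vee\cong \pi_{k+1-i}^\tau$ established in the proof of Theorem \ref{ladder-thm-2} we have $t_i=t_{k+1-i}$, so since $k$ is odd, $t\equiv t_{(k+1)/2}\pmod{2}$. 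I would split into two cases on the parity of $t$.

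When $t$ is odd, $\Sigma(\pi)$ is a rang\'{e}e module of pure type $[\rho]$ induced from an odd number of segments, and Lemma \ref{noboth-lem} says $\Sigma(\pi)$ is not $\eta^{\gamma(\rho)+1}$-distinguished. Any $(H_n,\eta^{\gamma(\rho)+1})$-invariant functional on the irreducible quotient $\pi$ would lift via $\Sigma(\pi)\twoheadrightarrow\pi$ to such a functional on $\Sigma(\pi)$, which is impossible; hence $\pi$ is not $\eta^{\gamma(\rho)+1}$-distinguished. Combined with the automatic $\eta^{\gamma(\rho)}$-distinction provided by Theorem \ref{ladder-thm-2} (since $t+1$ is even), $\pi$ cannot be both distinguished and $\eta$-distinguished.

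When $t$ is even, $t_{(k+1)/2}$ is even and at least $2$, so the middle proper ladder component, and hence $\pi$, is non-generic. I would proceed by induction on $n$; the base case $n=1$ is immediate since a character of $E^\times$ cannot restrict to both $1$ and $\eta$ on $F^\times$. In the inductive step, Lemma \ref{deriv-lem} applied to $\pi$ yields an irreducible subquotient $\sigma_1\in\mathcal{D}(\pi)$ with $\nu^{1/2}\sigma_1$ distinguished, and applied to $\chi\pi$ yields a subquotient $\sigma_2\in\mathcal{D}(\pi)$ with $\nu^{1/2}\sigma_2$ being $\eta$-distinguished. By \cite[Theorem 14]{LM} each such $\sigma_j$ is itself a ladder representation of pure type $[\rho]$ on a smaller general linear group, so $\nu^{1/2}\sigma_j$ is a ladder of pure type $[\nu^{1/2}\rho]$, and $\gamma(\nu^{1/2}\rho)=1-\gamma(\rho)$ by the remark following Proposition \ref{known}.

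The hard part is that Lemma \ref{deriv-lem} a priori produces two possibly different derivative subquotients $\sigma_1\neq \sigma_2$, whereas the induction needs a \emph{single} $\sigma\in\mathcal{D}(\pi)$ such that $\nu^{1/2}\sigma$ is both distinguished and $\eta$-distinguished and decomposes into an odd number of proper ladder components. To locate such a $\sigma$ I would refine the argument by working inside the Bernstein--Zelevinsky filtration of $\pi|_{P_n}$ whose subquotients are $(\Phi^+)^{j-1}\Psi^+(\pi^{(j)})$, and use the explicit description of $\mathcal{D}(\pi)$ to restrict attention to derivative subquotients satisfying the self-duality $\nu\sigma^\tau\cong\sigma^\vee$ necessary for $\nu^{1/2}\sigma$ to be distinguished at all; a combinatorial analysis of segments analogous to the one in the proof of Theorem \ref{ladder-thm} should single out such a $\sigma$ uniquely and verify the parity of its proper ladder decomposition, so that the inductive hypothesis applied to $\nu^{1/2}\sigma$ delivers the contradiction.
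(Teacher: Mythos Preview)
Your odd-$t$ case matches the paper's, but your even-$t$ case has a genuine gap. You correctly identify the difficulty: Lemma \ref{deriv-lem} applied separately to $\pi$ and to $\chi\pi$ produces two a priori unrelated derivative subquotients $\sigma_1,\sigma_2\in\mathcal{D}(\pi)$, and your proposed fix---singling out a unique $\sigma$ via the self-duality condition $\nu\sigma^\tau\cong\sigma^\vee$---does not work. Many elements of $\mathcal{D}(\pi)$ satisfy that condition (any symmetric shortening of the left endpoints does), so there is no uniqueness to exploit, and nothing in the filtration argument forces the distinguished and the $\eta$-distinguished functionals to land on the same piece. Without this, the induction never gets off the ground.

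The paper avoids induction entirely. It applies Lemma \ref{deriv-lem} only \emph{once}, to $\pi'=\chi^{\gamma(\rho)}\pi$, obtaining a single $\sigma\in\mathcal{D}(\pi')$ with $\nu^{1/2}\sigma$ distinguished, and then derives a contradiction directly from the structure of $\Sigma(\sigma)$. Writing $k_0=(k+1)/2$ and tracking the middle component $\sigma_{k_0}\in\mathcal{D}(\pi'_{k_0})$, there are two cases. If $\sigma_{k_0}$ retains all $t_0$ segments, then distinction of $\nu^{1/2}\sigma$ forces $(\nu^{1/2}\sigma_{k_0})^\vee\cong(\nu^{1/2}\sigma_{k_0})^\tau$, whence the real exponents of its segments sum to zero; but each segment of $\nu^{1/2}\sigma_{k_0}$ has real exponent at least $\tfrac12$ larger than the corresponding segment of $\pi'_{k_0}$ (whose exponents already sum to zero), contradicting $t_0\geq 2$. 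If $\sigma_{k_0}$ drops to $t_0-1$ segments, the total number of segments in $\Sigma(\nu^{1/2}\sigma)$ becomes odd, and Lemma \ref{noboth-lem} together with $\gamma(\nu^{1/2}\chi^{\gamma(\rho)}\rho)=1$ finishes. The key idea you are missing is this real-exponent inequality coming from the explicit shape of $\mathcal{D}(\pi)$: the derivative strictly raises $\sum_i\Re(\Delta_i)$, which is incompatible with the symmetry forced by distinction.
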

\begin{proof}
Suppose that $\pi=\pi_{[\rho]}\in \Pi_{G_n}$ and that $\Sigma(\pi)=\Delta_1\times\cdots\times\Delta_t$. Recalling Theorem \ref{ladder-thm}, we should prove that $\pi$ is not $\eta^{\gamma(\rho)+t}$-distinguished. If it was, then pulling back the non-zero functional would make $\Sigma(\pi)$ $\eta^{\gamma(\rho)+t}$-distinguished as well. For an odd $t$ this cannot happen by Lemma \ref{noboth-lem}. Thus, we will assume $t$ is even.

We write $k_0= (k+1)/2$. Let us denote by $i_0$ and $t_0$ the indices for which 
$$\Sigma(\pi_{k_0}) = \Delta_{i_0}\times\cdots\times \Delta_{i_0+t_0-1}\,.$$
Assume $\pi' = \chi^{\gamma(\rho)} \pi$ is distinguished. Then $(\pi')^\vee \cong (\pi')^\tau$ holds. When writing $\pi'_i = \chi^{\gamma(\rho)} \pi_i$, we clearly have $\pi'\cong \pi'_1\times \cdots\times \pi'_k$ as the decomposition of Lemma \ref{prop-decomp}. The argument in the proof of Theorem \ref{ladder-thm-2} shows that $(\pi'_{k_0})^\vee \cong (\pi'_{k_0})^\tau$ holds in such case. The same proof also shows that $t_0$ must be even as well.

In particular, $t_0>1$, which means that $\Delta_{i_0+1}$ precedes $\Delta_{i_0}$. Hence, $\Sigma(\pi')$ is not irreducible and $\pi'$ is non-generic. By Lemma \ref{deriv-lem}, there is a representation $\sigma\in \mathcal{D}(\pi')$, for which $\nu^{1/2} \sigma$ is distinguished. Hence, its standard module $\Sigma(\nu^{1/2} \sigma)$ is distinguished as well.

It can be easily seen from the description of the full derivative of a ladder representation that $\Sigma(\sigma) = \Sigma(\sigma_1)\times\cdots\times\Sigma(\sigma_k)$, for some $\sigma_i\in \mathcal{D}(\pi'_i)$. Up to some possibly empty $\sigma_i$, this is the decomposition of Lemma \ref{prop-decomp}.

From the same description, we see that $\Sigma(\sigma_{k_0})$ is constructed of either $t_0$ or $t_0-1$ segments. 

Let us first handle the case that $\Sigma(\nu^{1/2} \sigma_{k_0})=\nu^{1/2}\Sigma( \sigma_{k_0}) = \Delta'_{i_0}\times\cdots\times \Delta'_{i_0+t_0-1}$ (where all $\Delta'_i$ are non-empty segments). Arguing the same as for $\pi'$, the distinction of $\nu^{1/2} \sigma$ forces $(\nu^{1/2} \sigma_{k_0})^\vee\cong (\nu^{1/2} \sigma_{k_0})^\tau$. Now, by the proof of Theorem \ref{ladder-thm}, this means that $\Re(\Delta'_{2i_0 + t_0 -1-i})=-\Re(\Delta'_i)$ for all $i_0\leq i\leq i_0+t_0-1$. In particular, $\sum_{i=i_0}^{i_0+t_0-1} \Re(\Delta'_i)=0$. From the distinction of $\pi'$ we can again argue the same to deduce $\left(\sum_{i=i_0}^{i_0+t_0-1} \Re(\Delta_i)=\;\right) \sum_{i=i_0}^{i_0+t_0-1} \Re(\chi^{\gamma(\rho)}\Delta_i)=0$. Yet, by the description of $\mathcal{D}(\pi')$, $\Re(\Delta'_i) \geq 1/2 + \Re(\chi^{\gamma(\rho)}\Delta_i)$, for all $i_0\leq i\leq t_0+i_0-1$. Hence, a contradiction.

Otherwise, $\Sigma(\nu^{1/2} \sigma_{k_0})$ has a rang\'{e}e module realization induced from $t_0-1$ segments. Suppose that $\Sigma=\Sigma(\nu^{1/2} \sigma)$ has a rang\'{e}e module realization induced from $s$ segments. Again, referring to the same situation as in the proof of Theorem \ref{ladder-thm-2}, we see that $s-(t_0-1)$ is even. Hence, $s$ is odd. Keeping track of the tensoring operations we see that $\Sigma$ is of pure type $[\nu^{1/2}\chi^{\gamma(\rho)} \rho]$. Since $\Sigma$ is distinguished, by Lemma \ref{noboth-lem} we must have $\gamma(\nu^{1/2}\chi^{\gamma(\rho)} \rho)=0$. The remark after Proposition \ref{known} shows this is impossible, because obviously $\gamma(\chi^{\gamma(\rho)} \rho)=0$ holds.

\end{proof}

%If you'd like to thank anyone, place your comments here
%and remove the percent signs.

\bibliographystyle{abbrv}
\bibliography{propo2}{}

\end{document}